\theoremstyle{plain}%
  \newtheorem{theorem}{Theorem}
  \newtheorem{corollary}{Corollary}
  \newtheorem{proposition}{Proposition}
  \newtheorem{lemma}{Lemma}%                    
\theoremstyle{remark}
\theoremstyle{definition}
\newtheorem{definition}{Definition}
\newtheorem{example}{Example}
\begin{document}

\newcommand{\EEr}{{\mathcal E}_{r+1}}
\newcommand{\EErr}{{\mathcal E}_{r}}
\newcommand{\E}{{\mathcal E}}
\newcommand{\F}{{\mathcal F}}

\newcommand{\dunion}{\amalg}

\title{The Algebra of Graph Invariants - Upper and Lower Bounds for Minimal Generators}
\author{Tomi Mikkonen\\ Tampere University of Technology\\ Department of Signal
  Processing \\
  P.O.BOX 553\\
    FIN-33101 TAMPERE \\
    FINLAND\\
\small{tomi.mikkonen@tut.fi}\normalsize
\and Xavier Buchwalder\\ Universit\'e de Lyon\\ Institut Camille Jordan UMR
5208\\
Universit\'e Lyon 1\\
43 boulevard du 11 Novembre 1918\\
69622 Villeurbanne cedex\\
FRANCE\\
\small{buchwalder@math.univ-lyon1.fr}\\
}

\maketitle

\begin{abstract}
In this paper we study the algebra of graph invariants, focusing mainly on the
invariants of simple graphs. 

All other invariants, such as sorted eigenvalues, degree sequences and canonical permutations, belong to this algebra. In fact, every graph invariant is a linear combination of the basic graph invariants which we study in this paper.

To prove that two graphs are isomorphic, a number of basic invariants are required,
which are called separator invariants. The minimal set of separator invariants is
also the minimal basic generator set for the algebra of graph invariants. 

We find lower and upper bounds for the minimal number of generator/separator
invariants needed for proving graph isomorphism. 

Finally we find a sufficient condition for Ulam's conjecture to be true based
on Redfield's enumeration formula.
\end{abstract}

\section{Introduction}
Let $G=(V,E)$ and $H=(V,F)$ be simple graphs (i.e. unoriented, no loops, no multiple edges) with $n$ vertices, where $V$ is the common set of vertices and $E,F$ are the sets of edges. We say that $G$ is a subgraph of $H$ if $E\subseteq F$. Two graphs $G$ and $H$ are isomorphic, denoted by $G \cong H$, if there exists a permutation $\pi$ of the set of vertices such that $\pi E=F$. 

In this paper we study \emph{basic graph invariants} which count the number of
subgraphs isomorphic to $G$ in $H$. We denote by $I(G)(H)$ the number of
subgraphs isomorphic to $G$ in the graph $H$. Graphs are denoted by monomials
$\prod_{(i,j)\in E} a_{ij}$. Thus for instance
$I(a_{12})(a_{23}a_{24}a_{34})=3$ and
$I(a_{12}a_{23})(a_{23}a_{24}a_{34})=3$. The definition of $I(G)(H)$ depends
only on the isomorphism classes of $G$ and $H$.

Let $A$ be the adjacency matrix of the graph $H$, i.e. $a_{ij}=1$ if there is an edge between vertices $i$ and $j$ in $H$ and $a_{ij}=0$ otherwise. Because $H$ is an unoriented graph we have $a_{ij}=a_{ji}$. Then $I(G)(H)$ is a function in the variables $a_{ij}$ and can be written explicitly as 
\begin{equation}\label{eq:nollaa}
I(G)(H)=\frac{1}{|\mathrm{Stab}(a_{i_1j_1}a_{i_1j_2}\cdots a_{i_dj_d})|}\sum_{\rho \in S_n} a_{\rho(i_1)\rho(j_1)}a_{\rho(i_2)\rho(j_2)}\cdots a_{\rho(i_d)\rho(j_d)}.
\end{equation}
Here the $(i_kj_k)$-pairs correspond to the edges in some labeling of the graph $G$. The stabilizer is 
\begin{equation}
\mathrm{Stab}(a_{i_1j_1}\cdots a_{i_dj_d})=\{\rho \in S_n |  a_{\rho(i_1)\rho(j_1)}\cdots a_{\rho(i_d)\rho(j_d)}=a_{i_1j_1}\cdots a_{i_dj_d}\}
\end{equation}
with respect to the symmetric group $S_n$, where two monomials are considered
the same if they have the same variables. The use of the stabilizer in
equation (\ref{eq:nollaa}) guarantees that the coefficient of each monomial in
the sum is one. Note that every monomial is either $1$ or $0$ depending on
whether the monomial is contained in $H$. The total degree $d$ of the
polynomial, denoted also as $|G|$, corresponds to the number of edges in
$G$. Examples of these so-called \emph{orbit sums} are shown below.

The sum in (\ref{eq:nollaa}) clearly permutes the monomial $a_{i_1j_1}\cdots a_{i_dj_d}$ in every possible location in the vertex set $V$. Thus $I(G)(H)$ depends only on the isomorphism classes of the $G$ and $H$, i.e. it is invariant with respect to the labeling of the vertices. 

Consequently, in the definition of $I(G)(H)$ we can represent the graph $G$ using vertex-edge sets, the adjacency matrix, a monomial in the variables $a_{ij}$ or a graphic representation. For example the vertex-edge set $(\{1,2,3,4\},\{\{1,2\},\{1,3\},\{2,3\}\})$, the adjacency matrix
\begin{displaymath}
\left[ 
\begin{array}{llll}
0 & 1 & 1 & 0 \\
1 & 0 & 1 & 0 \\
1 & 1 & 0 & 0 \\
0 & 0 & 0 & 0
\end{array} \right],
\end{displaymath}
the monomial $a_{12}a_{13}a_{23}$ and Figure \ref{fig:0} represent the same
graph. 
\begin{figure}[!htk]
\begin{center}
\includegraphics[width=1cm,height=1.5cm]{./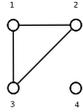}
\end{center}
\vspace{-0.5cm}
\caption{Graphic representation.}\label{fig:0}
\end{figure}

We can express the sum (\ref{eq:nollaa}) without division by using the quotient of
groups as follows
\begin{equation}\label{eq:eqeka}
I(G)(H)=\sum_{\rho \in S_n/\mathrm{Stab}(a_{i_1j_1}a_{i_1j_2}\cdots a_{i_dj_d})} a_{\rho(i_1)\rho(j_1)}a_{\rho(i_2)\rho(j_2)}\cdots a_{\rho(i_d)\rho(j_d)}.
\end{equation}
This representation remains valid in fields of finite characteristic and we
will use this as the definition of $I(G)(H)$.

We present examples mostly in the algebra $\left( \mathbb{C}[a_{ij}]/\langle
    a_{ij}^2-a_{ij},a_{ij}-a_{ji} \rangle \right)^{S_n}$ but all results generalize directly to general permutation
    groups $G$. Also generalization to $(\otimes_d \mathbb{Z}_+^n)^{G}$ is
    quite obvious and only partially presented here as the purpose of this
    paper is to provide an invariant theoretical view to classical graph theory.

\begin{example}
Choose $d=1$, $(i_1j_1)=(12)$, $n=4$, corresponding to $G=a_{12}$. The stabilizer for $a_{12}$ is $\mathrm{Stab}(a_{12})=\{ (1234),(2134),(1243),(2143) \}$ and 
\begin{equation}
S_n/\mathrm{Stab}(a_{12})=\{(1234),(1324),(1423),(2314),(2413),(3412)\}.
\end{equation}
The orbit sum (\ref{eq:eqeka}) becomes in this case
$I(a_{12})(H)=a_{12}+a_{13}+a_{14}+a_{23}+a_{24}+a_{34}$ and it calculates the number of edges in a graph with $4$ vertices. It is clearly invariant with respect to all permutations of vertices.
\end{example}
\begin{example}
Choose $d=2$, $(i_1j_1)=(12)$, $(i_2j_2)=(13)$, $n=4$.\\
The invariant $I(a_{12}a_{13})(H)=a_{12}a_{13}+a_{12}a_{14}+a_{12}a_{23}$
$+a_{12}a_{24}+a_{13}a_{14}+a_{13}a_{23}+a_{13}a_{34}+a_{14}a_{24}$
$+a_{14}a_{34}+a_{23}a_{24}+a_{23}a_{34}+a_{24}a_{34}$
calculates the number of subgraphs isomorphic to $a_{12}a_{13}$ in the graph $H$. Any permutation of vertices affects only the order of summation. 
\end{example}
 We call the polynomials $I(G)(H)$ \textit{basic graph invariants of type $G$}. The basic graph invariants $I(G)(H)$ are polynomials in the variables $a_{ij}$ and depend on $H$ only through the values of these variables. Thus we may consider the basic graph invariants as symbolic polynomials in $a_{ij}$ and we often drop the second graph ($H$ in $I(G)(H)$) from the notation. We use the notation $I(A)$ for this symbolic polynomial, where $A$ is some monomial in the orbit sum.

In \cite{Fleischmann} Fleischmann describes a general formula for the product
of two orbit sums in a graded algebra. In this paper we will modify this
product formula so that it calculates the product of two basic graph
invariants, i.e. 
\begin{equation}
I(A)\cdot I(B)=\sum_{k} c_{A,B}^k I(G_k),
\end{equation}
as a linear combination of basic graph invariants $I(G_k)$. The result is
closely similar to Kocay's lemma \cite{Kocay2} which also gives the coefficients $c_{A,B}^k$. In sections \ref{sec:se2} and \ref{sec:se4}, we introduce two distinct ways of calculating these coefficients. However the second looks more efficient, we still believe that the first is of independent interest.

\begin{example}
Consider the graph $F$ in Figure \ref{fig:1}. The reader can verify by
calculating the number of subgraphs of a given type that
$I(a_{12})(F)=9$, $I(a_{12}a_{13})(F)=15$, $I(a_{12}a_{13}a_{14})(F)=4$,
$I(a_{12}a_{13}a_{23})(F)=3$, $I(a_{12}a_{34}a_{45})(F)=52$ and
$I(a_{12}a_{23}a_{34})$ $(F)=16$. The algebraic dependence given by the product formula will turn out to be 
\begin{eqnarray}
I(a_{12})I(a_{12}a_{13})&=&2I(a_{12}a_{13})+2I(a_{12}a_{23}a_{34})+3I(a_{12}a_{13}a_{23})\\\nonumber 
&&+3I(a_{12}a_{13}a_{23})+I(a_{12}a_{34}a_{45})
\end{eqnarray}
and it shows that there is an algebraic dependence between these invariants. Indeed $9\cdot 15=2\cdot 15+2\cdot16+3\cdot 4+3\cdot 3+52$.
\end{example}

\begin{figure}[!htk]
\begin{center}
\includegraphics[width=3cm,height=3cm]{./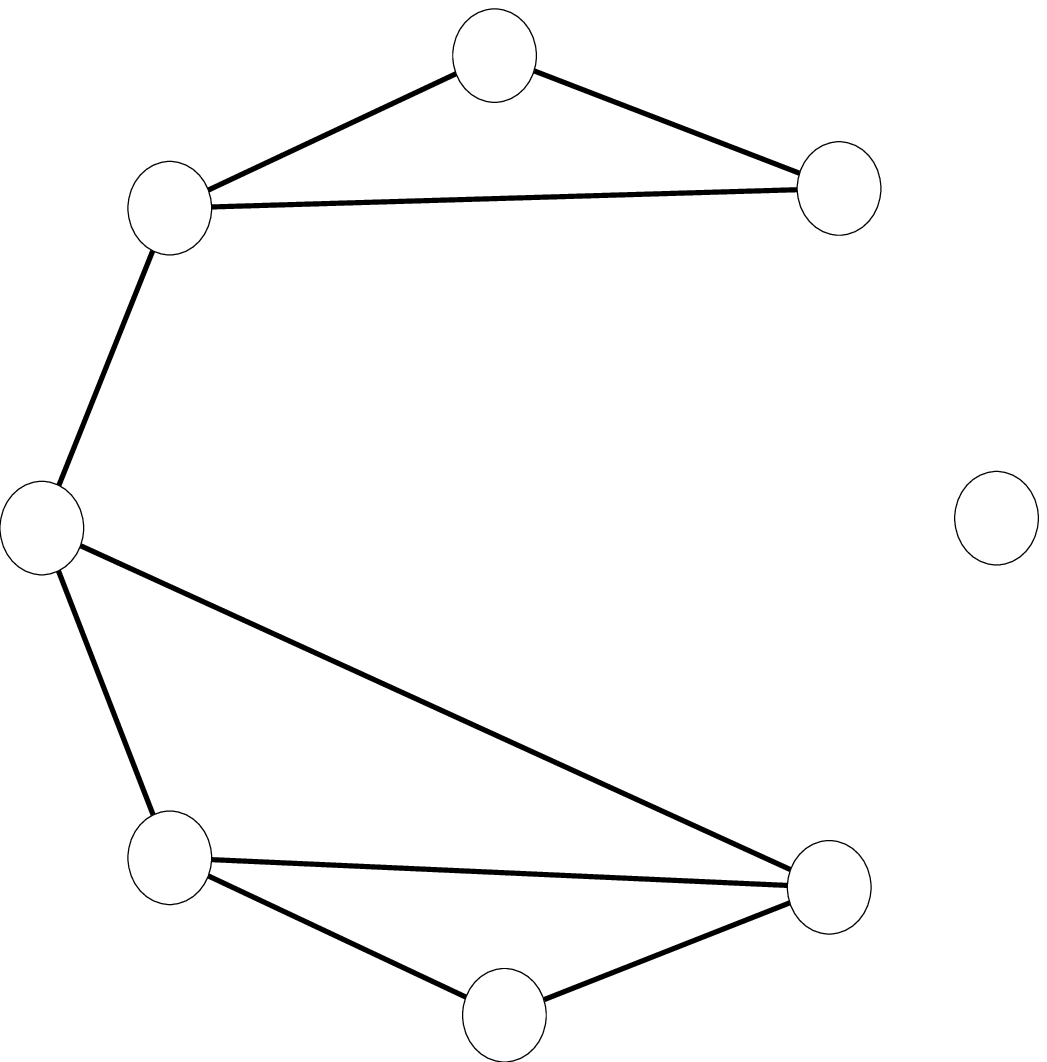}
\end{center}
\vspace{-0.5cm}
\caption{Graph $F$.}\label{fig:1}
\end{figure}

The graph isomorphism (GI) problem asks to determine, whether for any graphs
$A$ and $B$ there exists a permutation $\rho$ of vertices of the graph $A$
such that $\rho(A)=B$. There is no known polynomial-time algorithm for solving
GI and some results indicate that general GI might not belong to P
\cite{Arvind}, \cite{Koebler}. There are, however, several GI algorithms which perform very well on average \cite{McKay},\cite{Tinhofer}. If the vertex degree (i.e. the number of edges adjacent to a vertex) is bounded, then GI belongs to P \cite{Luks}. 

In section \ref{sec:se6} we calculate the upper and lower bounds for the minimal number of basic graph invariants required to prove graph isomorphism between two arbitrary graphs.

Many results presented here were proved independently by both of the authors and moreover had already been published previously by other mathematicians. The authors tried their best to provide a self-contained, rather complete, treatment of the subject, however the interested reader might have a look at the works of J.A. Bondy, W. L. Kocay,
V. B. Mnukhin, M. Pouz$\mathrm{\acute{e}}$t, B.D. Thatte and N. Thi$\mathrm{\acute{e}}$ry (\cite{Bondy}, \cite{Kocay}, \cite{Mnukhin}, \cite{Pouzet}, \cite{Thatte},\cite{Thiery1}, \cite{Thiery2}), each of them having its own distinct point of view (e.g. N. Thi$\mathrm{\acute{e}}$ry took a classical invariant theoretic approach in \cite{Thiery2}). We must note that
the classical invariants form a graded algebra unlike the invariants in this paper. This
is due to reduction $a_{ij}^2=a_{ij}$ since $a_{ij}\in\{0,1\}$ for simple
graphs. 

This paper is based on the product formula and what we
call the $G$-poset theory which is roughly the finite set system theory with a
permutation group. The product formula and $G$-poset theory are quite
essential in the reconstruction problem. In section $7$ we show a sufficient
condition for this conjecture to be true.

In the following section we calculate the product of basic graph
invariants $I(A)$ and $I(B)$. In section \ref{sec:se3} we show a couple of
examples and consequences of the product formula. In section \ref{sec:se4} we
show how all graph invariants can be written as a linear combination of the
basic graph invariants. In section \ref{sec:se5} we derive a simpler formula
for the product of two graph invariants. In section $7$ we study the minimal
set of generator/separator invariants. 

\section{Product Formula for Graph Invariants}\label{sec:se2}
Fleischmann's product formula for two orbit sums is not directly applicable to
graph invariants of simple graphs where $a_{ij}\in \{0,1\}$. We use a simple
example to show this. Let $G \subseteq S_n$ be any permutation group. For any number of vertices $n$, the permutation $(23)\in \mathrm{Stab}(a_{12}a_{13})$ but $(23) \notin \mathrm{Stab}(a_{12}^2a_{13})$, in fact $\mathrm{Stab}(a_{12}a_{13})=\langle (23), \mathrm{Stab}(a_{12}^2a_{13}) \rangle$. However $a_{12}^2a_{13}=a_{12}a_{13}$ if $a_{ij}\in{0,1}$ i.e. there is a reduction $a_{ij}^2=a_{ij}$. Thus 
\begin{equation}
\sum_{\rho \in G/\mathrm{Stab}(a_{12}^2a_{13})} a_{\rho(1)\rho(2)}^2a_{\rho(1)\rho(3)}=2\sum_{\rho \in G/\mathrm{Stab}(a_{12}a_{13})} a_{\rho(1)\rho(2)}a_{\rho(1)\rho(3)}.
\end{equation} 

In general by the \emph{the orbit-stabilizer theorem} 
\begin{eqnarray}\label{eq:kerroin}
\sum_{\rho \in G/\mathrm{Stab}(a_{i_1j_1}^{e_1}a_{i_2j_2}^{e_2}\cdots a_{i_dj_d}^{e_d})} a_{\rho(i_1j_1)}^{e_1}a_{\rho(i_2j_2)}^{e_2}\cdots a_{\rho(i_dj_d)}^{e_d}\\ \nonumber
=\frac{|\mathrm{Stab}(a_{i_1j_1}a_{i_2j_2}\cdots a_{i_dj_d})|}{|\mathrm{Stab}(a_{i_1j_1}^{e_1}a_{i_2j_2}^{e_2}\cdots a_{i_dj_d}^{e_d})|}\sum_{\rho \in G/\mathrm{Stab}(a_{i_1j_1}a_{i_2j_2}\cdots a_{i_dj_d})} a_{\rho(i_1j_1)}a_{\rho(i_2j_2)}\cdots a_{\rho(i_dj_d)},
\end{eqnarray} 
 making higher degree invariants redundant. Since $\prod_{i<j}^n a_{ij}$ is the highest degree monomial and it contains ${n \choose 2}$ variables, we observe that ${n \choose 2}$ provides an upper bound for the degree of basic graph invariants which are given by sums of type (\ref{eq:kerroin}).

Fleischmann's product formula for the product of orbit sums of monomials $A$ and $B$  over an arbitrary permutation group $G$ is
\begin{equation}\label{eq:eqp}
I(A)I(B)=\sum_{g\in[\mathrm{Stab}(A):G:\mathrm{Stab}(B)]} \frac{|\mathrm{Stab}(AgB)|}{|\mathrm{Stab}(A) \cap g\mathrm{Stab}(B)|}I(AgB),
\end{equation}
where $[G_1:G:G_2]$ denotes the cross-section of the group G with subgroups $G_1,G_2 \trianglelefteq G$ s.t. $G=\uplus_{g\in[G_1:G:G_2]}G_1gG_2$. This product formula applies directly to invariants of multigraphs where $a_{ij}$ are in a commutative algebra.

Let $\widehat{A}$ denote $a_{i_1j_1}^{e_1}\cdots a_{i_dj_d}^{e_d} \ \mathrm{mod} \langle a_{i_1j_1}^2-a_{i_1j_1},\ldots,a_{i_dj_d}^2-a_{i_dj_d} \rangle$, i.e. 
\begin{equation}
\widehat{a_{i_1j_1}^{e_1} \cdots a_{i_dj_d}^{e_d}}=a_{i_1j_1} \cdots a_{i_dj_d}.
\end{equation}
With this notation, we can express the Orbit Lemma as 
\begin{equation}
I(G)=\frac{|\mathrm{Stab}(\widehat{G})|}{|\mathrm{Stab}(G)|}I(\widehat{G}).
\end{equation}

 To get a product formula for graph invariants of simple graphs where $a_{ij}\in \{0,1\}$, we expand the terms in (\ref{eq:eqp}) by the formula (\ref{eq:kerroin}). This results in
\begin{eqnarray}
&&I(A)I(B)\\ 
&=&\sum_{g\in[\mathrm{Stab}(A):G:\mathrm{Stab}(B)]} \frac{|\mathrm{Stab}(AgB)|}{|\mathrm{Stab}(A) \cap g\mathrm{Stab}(B)|}I(AgB) \\ \nonumber
&=&\sum_{g\in[\mathrm{Stab}(A):G:\mathrm{Stab}(B)]} \frac{|\mathrm{Stab}(AgB)|}{|\mathrm{Stab}(A) \cap g\mathrm{Stab}(B)|}\frac{|\mathrm{Stab}(\widehat{AgB})|}{|\mathrm{Stab}(AgB)|}I(\widehat{AgB}) \\ \nonumber
&=&\sum_{g\in[\mathrm{Stab}(A):G:\mathrm{Stab}(B)]} \frac{|\mathrm{Stab}(\widehat{AgB})|}{|\mathrm{Stab}(A) \cap g\mathrm{Stab}(B)|}I(\widehat{AgB}).
\end{eqnarray}
This proves

\begin{theorem}\label{the:tulo}
The product formula for graph invariants $I(A)$ and $I(B)$, where $A,B$ are simple graphs, is
\begin{equation}\label{eq:tulo1}
I(A)I(B)=\sum_{g\in[\mathrm{Stab}(A):S_n:\mathrm{Stab}(B)]} \frac{|\mathrm{Stab}(\widehat{AgB})|}{|\mathrm{Stab}(A) \cap g\mathrm{Stab}(B)|}I(\widehat{AgB}),
\end{equation}
\end{theorem}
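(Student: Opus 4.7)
The plan is to reduce the statement to Fleischmann's product formula (equation~\ref{eq:eqp}), which holds in the graded polynomial algebra before we impose the Boolean relations $a_{ij}^2 = a_{ij}$, and then to translate each term into its simple-graph form via the Orbit Lemma. So the first step is to apply Fleischmann directly to $I(A)$ and $I(B)$, producing
\begin{equation*}
I(A)I(B) = \sum_{g\in[\mathrm{Stab}(A):S_n:\mathrm{Stab}(B)]} \frac{|\mathrm{Stab}(AgB)|}{|\mathrm{Stab}(A) \cap g\mathrm{Stab}(B)|}\, I(AgB).
\end{equation*}
The subtle point is that in general $AgB$ is not a squarefree monomial: whenever the edge sets of $A$ and $gB$ overlap, some $a_{ij}$ picks up exponent $2$. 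So $I(AgB)$ is not yet a basic graph invariant in the sense of the paper.

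The second step is to rewrite each $I(AgB)$ in terms of the reduced monomial $\widehat{AgB}$. Here I invoke the Orbit Lemma, which is exactly the content of equation~\ref{eq:kerroin}: for any monomial $M$, the orbit-stabilizer theorem gives $I(M) = \frac{|\mathrm{Stab}(\widehat{M})|}{|\mathrm{Stab}(M)|}\, I(\widehat{M})$. The stabilizer of $\widehat{M}$ is generally larger because forgetting multiplicities allows extra permutations to fix the monomial; the ratio compensates for the multiplicity with which each reduced monomial appears when one expands the orbit sum.

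Substituting $I(AgB) = \frac{|\mathrm{Stab}(\widehat{AgB})|}{|\mathrm{Stab}(AgB)|}\, I(\widehat{AgB})$ into the Fleischmann expansion, the factor $|\mathrm{Stab}(AgB)|$ appears once in the numerator and once in the denominator and therefore cancels, leaving
\begin{equation*}
I(A)I(B) = \sum_{g\in[\mathrm{Stab}(A):S_n:\mathrm{Stab}(B)]} \frac{|\mathrm{Stab}(\widehat{AgB})|}{|\mathrm{Stab}(A) \cap g\mathrm{Stab}(B)|}\, I(\widehat{AgB}),
\end{equation*}
which is the claimed formula.

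The main obstacle, and the only place where real care is needed, is bookkeeping around the Orbit Lemma: one must be confident that the identity $I(M) = \frac{|\mathrm{Stab}(\widehat{M})|}{|\mathrm{Stab}(M)|} I(\widehat{M})$ holds term-by-term under the coset decomposition, rather than just after evaluation on some graph $H$. Since both sides are defined via quotients by stabilizers in the group algebra (the definition of $I$ given in equation~\ref{eq:eqeka} is intrinsic and independent of $H$), this reduces to a purely group-theoretic index computation — the index $[\mathrm{Stab}(\widehat{M}):\mathrm{Stab}(M)]$ counts exactly the number of cosets that collapse when squares are identified with linear terms. Once this bookkeeping is verified, the rest of the derivation is a formal substitution.
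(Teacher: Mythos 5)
Your proof is correct and follows essentially the same route as the paper: apply Fleischmann's formula (\ref{eq:eqp}) and then rewrite each $I(AgB)$ via the Orbit Lemma (\ref{eq:kerroin}), cancelling $|\mathrm{Stab}(AgB)|$. The extra bookkeeping remark about term-by-term validity is a sensible precaution but does not change the argument.
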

This formula is quite difficult to use but we can interpret the set of permutations $g\in[\mathrm{Stab}(A):S_n:\mathrm{Stab}(B)]$  by using colored graphs. We associate a monomial $A$ in the variables $a_{ij}$ with colored graphs by equating the color of the edge $(ij)$ with the exponent of the variable $a_{ij}$ in the monomial $A$. 
\begin{example}
The monomial $a_{13}a_{14}a_{24}^2a_{25}^2a_{35}^3$ corresponds to the graph in Figure \ref{fig:4}.
\end{example}

\begin{figure}[!htk]
\begin{center}
\includegraphics[width=3cm,height=4.3cm]{./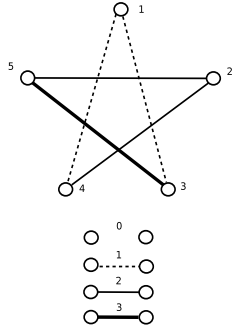}\label{fig:4}
\end{center}
\vspace{-0.5cm}
\caption{The colored graph corresponding to the monomial $a_{13}a_{14}a_{24}^2a_{25}^2a_{35}^3$. }
\end{figure}

Now consider the permutation group $G=S_n$ and all graphs $A\rho B$, where $\rho \in S_n$, such that the edges of $A$ have color 1, the edges of $B$ have color 2, the vertices of $B$ are permuted over all permutations and whenever two edges coincide the color of the edge is 3. Then the set of isomorphism classes of these colored graphs, denoted by $\mathcal{C}(A,B)$, corresponds to the set of monomials $\bigcup_{g\in[\mathrm{Stab}(A):S_n:\mathrm{Stab}(B)]} AgB$. The coloring of graphs corresponds to the modification of monomials $AgB$ such that all the variables in part $A$ are raised to the power $1$ and all the variables in $B$ are raised to the power $2$. 

\begin{proposition}
The map $\phi$ between the sets $\{AgB | g\in [\mathrm{Stab}(A):S_n:\mathrm{Stab}(B)]\}$ and $\mathcal{C}(A,B)$ coloring $AgB$ as above is bijective.
\end{proposition}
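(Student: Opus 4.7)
My plan is to reduce the bijectivity of $\phi$ to the defining property of a cross-section by identifying ``colored isomorphism'' of two colored graphs $A g_1 B$ and $A g_2 B$ with membership of $g_1$ and $g_2$ in a common $(\mathrm{Stab}(A),\mathrm{Stab}(B))$-double coset of $S_n$.

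The first step is the key equivalence: a permutation $\pi \in S_n$ induces a color-preserving isomorphism from the colored graph of $A g_1 B$ onto that of $A g_2 B$ if and only if $\pi E(A)=E(A)$ and $\pi g_1 E(B) = g_2 E(B)$. The ``only if'' direction follows because the three color classes $E(A)\setminus E(gB)$, $E(gB)\setminus E(A)$ and $E(A)\cap E(gB)$ are determined by the pair $(E(A),E(gB))$, so preservation of the three classes forces preservation of the pair; the ``if'' direction is immediate since any permutation acts on sets by preserving intersections and set differences. These two conditions translate respectively to $\pi \in \mathrm{Stab}(A)$ and $g_2^{-1}\pi g_1 \in \mathrm{Stab}(B)$, which together say exactly that $g_1$ and $g_2$ lie in the same double coset $\mathrm{Stab}(A)\, g\, \mathrm{Stab}(B)$.

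Injectivity of $\phi$ then follows at once: distinct cross-section representatives lie in distinct double cosets, so the associated colored graphs cannot be color-isomorphic. For surjectivity, an arbitrary class in $\mathcal{C}(A,B)$ is represented by $A\rho B$ for some $\rho \in S_n$; writing $\rho=\sigma g \tau$ with $\sigma \in \mathrm{Stab}(A)$, $g$ in the cross-section, and $\tau \in \mathrm{Stab}(B)$, one has $\rho E(B)=\sigma g E(B)$ since $\tau$ fixes $E(B)$, so $\sigma^{-1}$ is a color-preserving isomorphism from $A\rho B$ onto $A g B$ by the equivalence above, and $\phi(A g B)$ represents the given class.

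The main obstacle is the bookkeeping of the three color classes; once one observes that color $3$ is automatically preserved as soon as the two ``large'' edge sets $E(A)$ and $E(gB)$ are each preserved, the remainder of the argument is a direct unpacking of the double-coset definition of a cross-section.
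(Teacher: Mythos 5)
Your proof is correct and follows essentially the same route as the paper: both arguments rest on the observation that the coloring lets one recover $E(A)$ (colors $1$ and $3$) and $E(gB)$ (colors $2$ and $3$) from the colored graph, so a color-preserving isomorphism forces $\pi\in\mathrm{Stab}(A)$ and $g_2^{-1}\pi g_1\in\mathrm{Stab}(B)$, placing $g_1$ and $g_2$ in the same double coset and hence making them equal as cross-section representatives. The only difference is that you spell out surjectivity via the decomposition $\rho=\sigma g\tau$, whereas the paper simply asserts that $\phi$ is clearly onto; this is a welcome but minor elaboration, not a different approach.
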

\begin{proof}

Since $\phi$ is clearly onto, all we have to show is that the set of colored graphs
\begin{equation}
\{AgB | g\in [\mathrm{Stab}(A):S_n:\mathrm{Stab}(B)]\}
\end{equation}
does not contain two elements $AgB$ and $AhB$ (where $g, h\in
[\mathrm{Stab}(A):S_n:\mathrm{Stab}(B)]$ and $h \neq g$) such that $\pi
AgB=AhB$ for some permutation $\pi \in S_n$. Suppose we had such a pair $\pi
AgB=AhB$. Because of the coloring we can recover the location of the edges of
$A$ in $AgB$, namely, every edge in $AgB$ with color $1$ or $3$
corresponds to an edge in $A$. This implies that the permutation $\pi \in
\mathrm{Stab}(A)$ since it maps $\pi A=A$. Also because of the coloring we
have $\pi gB=hB$ which implies that $\exists b\in \mathrm{Stab}(B)$ s.t. $\pi
g = hb$ as we can take $b=h^{-1}\pi g$.

 We can now solve $g=\pi^{-1}hb$ and so $g \in \mathrm{Stab}(A)h \mathrm{Stab}(B)$ which implies $g=h$ by the choice of $g,h \in  [\mathrm{Stab}(A):S_n:\mathrm{Stab}(B)]$. This is a contradiction and thus $\phi$ is an injective map.
 \end{proof}

Remark that the identity (\ref{eq:eqp}) gives
\begin{equation}
I(A) \sum_{\rho \in Sn/\mathrm{Stab}(B)}\rho B=\sum_{g\in[\mathrm{Stab}(A):S_n:\mathrm{Stab}(B)]} \frac{|\mathrm{Stab}(AgB)|}{|\mathrm{Stab}(A) \cap g\mathrm{Stab}(B)|}I(AgB).
\end{equation}

Kocay's lemma \cite{Kocay2} on the other hand says that the coefficient
$c_{A,B}^k$ equals the number of pairs $(C,D)$ where $C \cong A$, $D \cong B$ and $g_k = C \cup D$, for any representative of $G_k$. We can group these pairs according to the type of coloring they give and see that the groups are precisely the orbits of the coloring under the action of $Stab(G_k)$. Thus the two results are the essentially the same, any monomial in the resulting product being split into its distinct type of coloring in the Fleischmann formula.

\begin{example}
Consider the product $I(a_{12}a_{13})^2$, which can be calculated using Theorem \ref{the:tulo}.
\begin{eqnarray}
I(a_{12}a_{13})^2&=&4I(a_{12}a_{23}a_{34}a_{14})+2I(a_{12}a_{23}a_{34})+2I(a_{12}a_{23}a_{24}a_{34})\\
\nonumber
&&+2I(a_{12}a_{23}a_{24}a_{34})+6I(a_{12}a_{13}a_{14})+6I(a_{12}a_{13}a_{23})+I(a_{12}a_{13}).
\end{eqnarray}
The term $I(a_{12}a_{13}a_{14})$ arrives for instance from the monomial
$a_{12}a_{13}^2a_{14}$ $\equiv a_{12}a_{13}a_{14}$ $\mathrm{mod}$
$a_{13}^2-a_{13}$ . The coefficient of $I(a_{12}a_{13}a_{14})$ is $6$ because
the numerator $|\mathrm{Stab}$ $(a_{12}a_{13}a_{14})|=6$  and in the denominator the intersection of stabilizers $\mathrm{Stab}(a_{12}a_{13})=\{(1234),(1324)\}$ and $\mathrm{Stab}(a_{13}a_{14})=\{(1234),(1243)\}$ is the trivial group and thus the denominator is $1$.

There is a problem in the term $I(a_{12}a_{23}a_{24}a_{34})$. Instead of having this invariant with coefficient $4$ we have it split into two parts. This is because there are two non-isomorphic colorings for this graph in the product $I(a_{12}a_{13})^2$. See Figure \ref{fig:3} for these colorings. In section \ref{sec:se5} we will solve this problem. 
\end{example}

\begin{figure}[!htk]
\begin{center}
\includegraphics[width=3cm,height=1.3cm]{./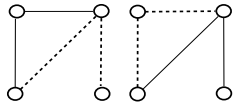}
\end{center}
\vspace{-0.5cm}
\caption{Two non-isomorphic colorings.}\label{fig:3}
\end{figure}

\section{Examples}\label{sec:se3}
The product formula (\ref{eq:tulo1}) describes connections between the numbers of different subgraph isomorphism classes of graphs. We use two examples to show these connections in explicit form.\\
\begin{example}
Let $g_1=I(a_{12})$, $g_2=I(a_{12}a_{13})$, $g_3=I(a_{12}a_{13}a_{23})$. The multiplication table of these invariants calculated using Theorem \ref{the:tulo} is given in Table \ref{tbl.mul}.

\begin{table}[!htk]
\begin{small}
\caption{Multiplication table for graph invariants with $n=3$.}\label{tbl.mul}
\begin{center}
\begin{tabular}{|l|lll|l}\hline
 & $g_1$ & $g_2$ & $g_3$ \\
 \hline 
$g_1$ & $g_1+2g_2$ & $2g_2+3g_3$ & $3g_3$ \\
$g_2$ & & $g_2+6g_3$ & $3g_3$ \\
$g_3$ & & & $g_3$ \\
\hline
\end{tabular}
\end{center}
\end{small}
\end{table}
We can see from Table \ref{tbl.mul} that the minimal generator set is
$\{g_1\}$ and the other invariants are given by the relations
$g_2=(g_1^2-g_1)/2$, $g_3=1/6g_1^3-1/2g_1^2+g_1$. Thus the transcendence
degree of $(k[a_{12},a_{13},a_{23}]/\langle a_{ij}^2-a_{ij} \rangle)^{S_3}$ is
one. The values of $g_1$ are limited by $0 \leq g_1 \leq 3$.\\
\end{example}
\begin{example}
Let $g_1=I(a_{12})$, $g_2=I(a_{12}a_{34})$, $g_3=I(a_{12}a_{13})$, $g_4=I(a_{12}a_{23}a_{34})$, $g_5=I(a_{12}a_{13}a_{14})$, $g_6=I(a_{12}a_{13}a_{23})$, $g_7=I(a_{12}a_{23}a_{34}a_{14})$, $g_8=I(a_{12}a_{23}a_{24}a_{34})$, $g_9=I(a_{12}a_{23}a_{34}a_{14}a_{13})$, $g_{10}=I(a_{12}a_{13}a_{14}a_{23}a_{24}a_{34})$. The first column of the multiplication table is in Table \ref{tbl.mul4}. We can solve for $g_2,g_4,g_6,g_7,g_8,g_9$ and $g_{10}$ in terms of $g_1,g_3$ and $g_5$ from Table \ref{tbl.mul4}, the solution is below.

\begin{table}[!htk]
\begin{small}
\caption{The first column of the multiplication table for graph invariants with $n=4$.}\label{tbl.mul4}
\begin{center}
\begin{tabular}{|l|l|l}\hline
 & $g_1$ \\
 \hline 
$g_1$ & $g_1+2g_2+2g_3$ \\
$g_2$ & $2g_2+g_4$  \\
$g_3$ & $2g_3+2g_4+3g_6+3g_5$  \\
$g_4$ & $3g_4+4g_7+2g_8$ \\
$g_5$ & $3g_5+g_8$  \\
$g_6$ & $3g_6+g_8$ \\
$g_7$ & $4g_7+g_9$ \\
$g_8$ & $4g_8+4g_9$ \\
$g_9$ & $5g_9+6g_{10}$ \\
$g_{10}$ & $6g_{10}$ \\
\hline
\end{tabular}
\end{center}
\end{small}
\end{table}

\begin{table}[!htk]
\begin{small}
\begin{tabular}{cc}
$g_2 =$ & $1/2g_1^2-1/2g_1-g_3$ \\
$g_4 =$ & $1/2g_1^3-3/2g_1^2-g_11g_3+g_1+2g_3$, \\
$g_6 =$ & $g_1g_3-2g_3-2/3g_1-g_5-1/3g_1^3+g_1^2$,\\ 
$g_8 =$ & $ g_1g_5-3g_5$, \\
$g_7 =$ & $-3/4g_1^3-1/2g_1g_5+3/2g_5+1/8g_1^4-3/4g_1$ \\
       & $-1/4g_1^2g_3+11/8g_1^2+5/4g_1g_3-3/2g_3$, \\
$g_{10} =$ & $-47/12g_1g_5-5/2g_1-5g_3+5g_5+137/24g_1^2$ \\
        & $+77/12g_1g_3-75/16g_1^3-1/12g_1^3g_5+g_1^2g_5$ \\
        & $-1/24g_1^4g_3+7/12g_1^3g_3-71/24g_1^2g_3$ \\
        & $+85/48g_1^4-5/16g_1^5+1/48g_1^6$, \\
$g_9 =$ & $-5/4g_1^4-1/2g_1^2g_5+7/2g_1g_5+1/8g_1^5-25/4g_1^2$ \\
       & $-1/4g_1^3g_3+35/8g_1^3+9/4g_1^2g_3-13/2g_1g_3-6g_5+3g_1+6g_3$.\\
\end{tabular}
\end{small}
\end{table}
\end{example}
For the reader familiar with general invariant theory (see \cite{Sturmfels}) we remark that by calculating the Gr$\mathrm{\ddot{o}}$bner basis of the relations in the multiplication table, we get syzygies describing completely the possible values of the graph invariants in  graphs with $n=4$. For instance $g_1$ satisfies the syzygy $g_1^7-21g_1^6+175g_1^5-735g_1^4+1624g_1^3-1764g_1^2+720g_1=0$ which has roots $0,1,2,\ldots,6$, determining the possible values for $g_1$.

The algebraic dependencies in the example above hold only if the number of vertices is $4$. It is easy, however, to construct general products.
\begin{lemma}\label{lem:lkjh}
 General products, i.e.\ products independent of the number of vertices, can be calculated by selecting $n \geq cv(A)+cv(B)$, where $cv(A)$ denotes the number of vertices in connection with the edges of the graph.
\end{lemma}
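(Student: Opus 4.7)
The goal is to show that, once $n\geq cv(A)+cv(B)$, both the set of isomorphism classes $\widehat{AgB}$ appearing on the right-hand side of formula (\ref{eq:tulo1}) and each of their coefficients become independent of $n$. I would split the argument into a combinatorial part (terms stabilize) and an arithmetic part (coefficients stabilize).

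For the combinatorial part, I would invoke the bijection with $\mathcal{C}(A,B)$ established in the previous proposition: double-coset representatives $g\in[\mathrm{Stab}(A):S_n:\mathrm{Stab}(B)]$ are in one-to-one correspondence with coloured-graph isomorphism classes of the overlay of $A$ and $gB$. The support of such an overlay has size at most $cv(A)+cv(B)$, with equality exactly when $\mathrm{supp}(A)\cap\mathrm{supp}(gB)=\emptyset$. So as soon as $n\geq cv(A)+cv(B)$, every abstract overlay of $A$ and a copy of $B$ --- including the disjoint one --- can be realised inside $[n]$, and any extra vertices beyond $cv(A)+cv(B)$ are isolated in $\widehat{AgB}$. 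Since adding isolated vertices does not change the isomorphism class of the unreduced graph, the list of terms $I(\widehat{AgB})$ appearing in the sum is exactly the list of overlays on a vertex set of the minimal size $cv(A)+cv(B)$.

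For the arithmetic part, I would use the usual factorisation of a stabiliser into its action on the support and the free permutation of the degree-$0$ vertices. If $M$ is a monomial with $\mathrm{supp}(M)$ of size $k$, then every $\sigma\in\mathrm{Stab}(M)\subseteq S_n$ preserves $\mathrm{supp}(M)$ setwise, so
\[
|\mathrm{Stab}(M)|=(n-k)!\cdot|\mathrm{Stab}_{\mathrm{Sym}(\mathrm{supp}(M))}(M)|.
\]
Applying this to $M=\widehat{AgB}$ with $k=|\mathrm{supp}(A)\cup\mathrm{supp}(gB)|$, and noting that any $\sigma\in\mathrm{Stab}(A)\cap g\mathrm{Stab}(B)=\mathrm{Stab}(A)\cap\mathrm{Stab}(gB)$ must preserve both $\mathrm{supp}(A)$ and $\mathrm{supp}(gB)$ and hence their union, the same factor $(n-k)!$ appears in both numerator and denominator of
\[
\frac{|\mathrm{Stab}(\widehat{AgB})|}{|\mathrm{Stab}(A)\cap g\mathrm{Stab}(B)|}
\]
and cancels, leaving a ratio computed purely on the support. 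Combined with the first step, this shows that for $n\geq cv(A)+cv(B)$ the product (\ref{eq:tulo1}) truly is the ``general'' product.

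The main delicate point I expect to be pedantic about is the identification of terms across different values of $n$: one must exhibit, given $n'>n\geq cv(A)+cv(B)$, a canonical correspondence between double-coset representatives in $S_n$ and in $S_{n'}$ whose associated coloured graphs differ only by isolated vertices. Once this matching is set up the two computations above give equal terms and equal coefficients, which is the content of the lemma.
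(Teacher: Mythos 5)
Your proposal is correct and follows essentially the same route as the paper: the key step in both is the factorisation $|\mathrm{Stab}(\widehat{AgB})|=(n-k)!\,|\mathrm{Stab}_{S_k}(\widehat{AgB})|$ together with the analogous factorisation of $\mathrm{Stab}(A)\cap g\mathrm{Stab}(B)$, so that the $(n-k)!$ factors cancel in the coefficient. Your "combinatorial part" (stabilisation of the list of terms via the coloured-graph bijection) is spelled out more explicitly than in the paper, which disposes of it in one sentence about the maximal number of distinct indices in $AgB$, but it is the same idea.
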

\begin{proof}
Notice that $cv(G)$ is the number of vertex-indices in the monomials of the graph invariant $I(G)$. The maximum number of distinct indices in any monomial $AgB$ is thus $cv(A)+cv(B)$.
 
The coefficient of $I(\widehat{AgB})$ in the product of $I(A)I(B)$ is  $\frac{|\mathrm{Stab}(\widehat{AgB})|}{|\mathrm{Stab}(A) \cap g\mathrm{Stab}(B)|}$. This remains the same when $n$ exceeds $cv(A)+cv(B)$. This can be seen by noticing that $\mathrm{Stab}(\widehat{AgB})=S_{n-cv(\widehat{AgB})} \times \mathrm{Stab}_{S_{cv(\widehat{AgB})}}(\widehat{AgB})$, where $S_n$ denotes the symmetric group and $\mathrm{Stab}_{S_{cv(\widehat{AgB})}}(\widehat{AgB})$ is the stabilizer of $\widehat{AgB}$ with respect to permutations of the connected vertices in $\widehat{AgB}$. Thus
\begin{equation}
|\mathrm{Stab}(\widehat{AgB})|=(n-cv(\widehat{AgB}))!|\mathrm{Stab}_{S_{cv(\widehat{AgB})}}(\widehat{AgB})|.
\end{equation}
 Next notice that 
\begin{equation}
\mathrm{Stab}(A) \cap g\mathrm{Stab}(B)=S_{n-cv(\widehat{AgB})}\times \left( \mathrm{Stab}_{cv(A)}(A) \cap g \mathrm{Stab}_{cv(B)}(B) \right)
\end{equation}
since no permutation in $\mathrm{Stab}(A) \cap g\mathrm{Stab}(B)$ can map a vertex in connection with the edges in $A$ or $B$ outside the set vertices in connection with the edges. Thus in the coefficient $\frac{|\mathrm{Stab}(\widehat{AgB})|}{|\mathrm{Stab}(A) \cap g\mathrm{Stab}(B)|}$ the terms $(n-cv(\widehat{AgB}))!$ appear both in the denominator and the numerator and cancel each other out.
\end{proof}

\begin{example}
The algebraic dependence 
\begin{eqnarray}
I(a_{12})I(a_{12}a_{13})=2I(a_{12}a_{13})+2I(a_{12}a_{23}a_{34})+3I(a_{12}a_{13}a_{23})\\
\nonumber
+3I(a_{12}a_{13}a_{14})+I(a_{12}a_{34}a_{45})
\end{eqnarray}
 is general holding for all graphs, not just for graphs with 5 vertices since $cv(a_{12})=2$ and $cv(a_{12}a_{13})=3$. 
\end{example}

%%%%%%%%%%%%%%%%%%%%%%%%%%%%%%%%%%%%%%%%%%%%%%%%%%%%%%%%%%%%%%%%%%%%%%
\section{$G$-Posets and Mnukhin-Transforms}\label{sec:se4}
In this section we study orbit sums of monomials in a general context. We
generalize first the notion of $I(g_i)(g_j)$. Let $G\subseteq S_n$ be a
permutation group acting on the variables $x_1,\ldots,x_n$. Let $m^+$ denote
the orbit sum of the monomial $m$ over $G$, i.e. $\sum_{\rho \in
  G/\mathrm{Stab}(m)} x_1^{m_{\rho(1)}}\cdots x_n^{m_{\rho(n)}}$. 

To define basic invariants for multigraphs and more general objects we
introduce the following differential operator which plays the central role in
Cayley's $\Omega$-process in classical invariant theory \cite{Sturmfels}.

The differential operator corresponding to $m^+$ is defined as
\begin{equation}
\mathcal{D}_{m^+} := \frac{1}{m_1!m_2! \cdots m_n!}\sum_{\rho \in G/\mathrm{Stab}(m)} \frac{
  \partial^{|m|}}{\partial x_1^{m_{\rho(1)}} \cdots \partial x_n^{m_{\rho(n)}}}.
\end{equation}

The only difference with the original Cayley's operator is the coefficient
$\frac{1}{m_1!m_2! \cdots m_n!}$ which turns this operator into a Hasse
derivative. 

The value of this \emph{combinatorial invariant}, denoted by $I(m)(w)$ at the monomial $w$ is
\begin{equation}\label{eq:yle}
I(m)(w):=\{D_m(w)\}_{\mathbf{x}=1}
\end{equation}
where $m=x_1^{m_1}x_2^{m_2}\cdots x_n^{m_n}$ and $w=x_1^{w_1}x_2^{w_2}\cdots
x_n^{w_n}$. The reason for using the Hasse derivative is that $I(m)(m)$ should
be one maintaining the interpretation of counting subgraphs and defining an
unimodular Mnukhin-transform which we define shortly.

\begin{example}
Take $m=x_1x_2^2$ and $G=S_2$. Then we calculate
\begin{equation}
D_{m^+}=\frac{1}{2} \frac{\partial^3}{\partial x_1^2 \partial x_2}+\frac{1}{2}
\frac{\partial^3}{\partial x_1 \partial x_2^2}
\end{equation}
and
\begin{eqnarray}
&&\frac{1}{2} \frac{\partial^3}{\partial x_1^2 \partial x_2}x_1x_2^2+\frac{1}{2}
\frac{\partial^3}{\partial x_1 \partial x_2^2}x_1x_2^2 \\ \nonumber
&=&1.
\end{eqnarray}
Thus $I(m)(m)=\{x_1\}_{\mathbf{x}=1} = 1$.
\end{example}

\begin{example}
Take $m=x_1x_2^2, w=x_1^2x_2^2$ and $G=S_2$. Then 
\begin{eqnarray}
&&\frac{1}{2} \frac{\partial^3}{\partial x_1^2 \partial x_2}x_1^2x_2^2+\frac{1}{2}
\frac{\partial^3}{\partial x_1 \partial x_2^2}x_1^2x_2^2 \\ \nonumber
&=& 2x_2+2x_1.
\end{eqnarray}
Thus $I(m)(w)=\{2x_2+2x_1\}_{\mathbf{x}=1} = 4$.
\end{example}

\begin{lemma}
The invariants $I(a)$ coincide with the orbit sums $a^+$ if $a_i\in\{0,1\}$.
\end{lemma}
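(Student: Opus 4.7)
The plan is a direct computation from the definition of $D_{a^+}$, exploiting that the hypothesis $a_i \in \{0,1\}$ trivialises both the Hasse normalisation $1/\prod_i a_i!$ and each individual factor $\partial^{a_{\rho(i)}}/\partial x_i^{a_{\rho(i)}}$, which collapses to either the identity or a first-order $\partial/\partial x_i$ depending on whether $a_{\rho(i)}$ is $0$ or $1$.

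First I would rewrite the operator as
\begin{equation*}
D_{a^+} \;=\; \sum_{\rho \in G/\mathrm{Stab}(a)}\ \prod_{i\,:\,a_{\rho(i)}=1} \frac{\partial}{\partial x_i},
\end{equation*}
and then apply it to an arbitrary monomial $w = x_1^{w_1}\cdots x_n^{w_n}$. The one elementary identity needed is $\bigl[\partial x_i^{w_i}/\partial x_i\bigr]_{x_i=1} = w_i$, which holds for every non-negative integer $w_i$ (including $w_i=0$, where both sides vanish). Hence the $\rho$-summand of $D_{a^+}(w)\big|_{x=1}$ evaluates to $\prod_{i:a_{\rho(i)}=1} w_i$, which is exactly the value at $x_i = w_i$ of the $\rho$-th term of the orbit polynomial $a^+ = \sum_\rho \prod_i x_i^{a_{\rho(i)}}$. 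Summing over coset representatives gives
\begin{equation*}
I(a)(w) \;=\; D_{a^+}(w)\big|_{x=1} \;=\; a^+(w_1,\ldots,w_n)
\end{equation*}
for every monomial $w$, which is the claimed coincidence.

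The only point to be careful about is the precise meaning of ``coincide'': I interpret it as equality of the two functions $w \mapsto I(a)(w)$ and $w \mapsto a^+(w_1,\ldots,w_n)$ on all monomials. Since both sides are polynomial expressions in the exponent vector $(w_1,\ldots,w_n)$ and agree on every non-negative integer tuple, they agree as polynomials, which yields the identification $I(a) = a^+$ at the symbolic level. There is no real obstacle beyond this bookkeeping: the $\{0,1\}$ hypothesis is exactly what matches Hasse differentiation with na\"ive monomial evaluation. For contrast, if some $a_i \geq 2$ the same scheme would produce falling factorials $w_i(w_i-1)\cdots(w_i-a_i+1)$ in place of $w_i^{a_i}$, so the identification $I(a) = a^+$ would fail in general---exactly why the lemma is restricted to the squarefree case.
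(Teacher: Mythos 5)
Your proposal is correct and follows essentially the same route as the paper: both arguments reduce to a single monomial of the orbit sum, observe that with $a_i\in\{0,1\}$ the Hasse operator degenerates to a product of plain first derivatives, and use $\bigl[\partial x_i^{w_i}/\partial x_i\bigr]_{x_i=1}=w_i$ to match the derivative's value with the monomial evaluated at the exponent vector. Your closing remark on why exponents $\geq 2$ would break the identification (falling factorials versus powers) is a useful addition but does not change the argument.
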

\begin{proof}
It is sufficient to consider one monomial $a \in Orb_G(a')$ and the
corresponding differential operator $\mathcal{D}_{a}$. The monomial $a$ at $b$ equals
\begin{equation}
b_1^{a_1} \cdots b_{N}^{a_N} = \prod_{i:a_i =1} b_i.
\end{equation}
The differential operator at $b$ equals
\begin{equation}
\mathcal{D}_a x_1^{b_1} \cdots x_N^{b_N} = \left(\prod_{i:a_i=1} b_i
  x^{b_i-1}\right) \prod_{i:a_i=0}x^{b_i}
\end{equation}
which equals $\prod_{i:a_i=1}b_i$ at $\mathbf{x}=1$.
\end{proof}

Let 
\begin{equation}
\mathcal{D}_i^{a_i}=\frac{1}{a_i!}\frac{\partial^{a_i}}{\partial_i^{a_i}}
\end{equation}
and notice that $\mathcal{D}_i^{a_i} x_i^{b_i}={b_i \choose a_i} x^{b_i-a_i}$.

This gives us an important clue how to find the linear combination of
differential operators $\sum_{k=0}^{\infty} c_k^{a_i}\mathcal{D}_i^{k_i}x_i^{b_i}$ s.t. the value at $x=1$ is
$b_i^{a_i}$.

The linear equation for the coefficients is 
\begin{equation}
B c^{a_i}=[0^{a_i},1^{a_i},\ldots,{\infty}^{a_i}]^T,
\end{equation}
where $B$ is the matrix defined by the elements $b_{ij}={i \choose j}$. This
is called \emph{the binomial transform} and it is well known to have the inverse
$B^{-1}$ defined by the elements $\hat{b}_{ij}=(-1)^{i-j}{i \choose j}$.

Thus we can solve
\begin{equation}
c_k^{a_i}=\sum_{j=0}^{\infty} (-1)^{k-j} {k \choose j}j^{a_i}
\end{equation}
yielding the desired linear combination
\begin{equation}
\{\sum_{k=0}^{\infty}\sum_{j=0}^{\infty} (-1)^{k-j} {k \choose j}j^{a_i}
\mathcal{D}_i^{k} x_i^{b_i}\}_{x_i=1}=b_i^{a_i}.
\end{equation}
Notice $\mathcal{D}_i^{b_i+1}x^{b_i}=0$ and thus we can restrict the infinite
sums to
\begin{equation}
\{\sum_{k=0}^{b_i}\sum_{j=0}^{b_i} (-1)^{k-j} {k \choose j}j^{a_i}
\mathcal{D}_i^{k} x_i^{b_i}\}_{x_i=1}=b_i^{a_i}.
\end{equation}

Finally we combine the results and obtain the following proposition.

\begin{proposition}
The monomial orbit sum $a^+$ equals the following linear combination of
combinatorial invariants:
\begin{equation}
a^+=\sum_{k_1,\ldots,k_N=0}^{\infty} \left(\sum_{j_1,\ldots,j_N=0}^{\infty}
\prod_{h=1}^{N} (-1)^{k_h-j_h}{k_h \choose j_h}j_h^{a_h}\right) I(x_1^{k_1} \cdots x_N^{k_N}).
\end{equation}
\end{proposition}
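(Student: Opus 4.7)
The plan is to combine the one-variable inverse binomial transform identity already derived immediately above the proposition with a tensor-product construction across the $N$ coordinates and an orbit-averaging over the group $G$.

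First, starting from the identity
\[
b_i^{a_i}=\sum_{k_i\ge 0}c_{k_i}^{a_i}\{\mathcal{D}_i^{k_i}x_i^{b_i}\}|_{x_i=1},\qquad c_{k_i}^{a_i}=\sum_{j_i\ge 0}(-1)^{k_i-j_i}\binom{k_i}{j_i}j_i^{a_i},
\]
I would form the product over $h=1,\ldots,N$. Since the Hasse derivatives in distinct variables commute and act independently on $\prod_h x_h^{b_h}$, this yields, for every monomial $w=x_1^{w_1}\cdots x_N^{w_N}$,
\[
w^a=\sum_{k_1,\ldots,k_N}\Bigl(\prod_h c_{k_h}^{a_h}\Bigr)\{\mathcal{D}_{x^k}(w)\}|_{x=1},\qquad \mathcal{D}_{x^k}:=\prod_h\mathcal{D}_h^{k_h}.
\]
This already establishes the proposition when $G=\{e\}$, since each orbit is a singleton and $I(x^k)$ then coincides with $\mathcal{D}_{x^k}|_{x=1}$.

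Next, to handle an arbitrary permutation group $G$, I would sum this identity over $a'\in\mathrm{Orb}_G(a)$. On the left this produces $a^+(w)=\sum_{a'}w^{a'}$; interchanging the two finite summations on the right gives a coefficient $\sum_{a'}\prod_h c_{k_h}^{a'_h}$ multiplying $\{\mathcal{D}_{x^k}(w)\}|_{x=1}$. A direct reindexing shows this coefficient is constant on $G$-orbits of $k$: the action $k\mapsto\sigma\cdot k$ can be re-absorbed by the substitution $a'\mapsto\sigma^{-1}\cdot a'$, which merely permutes $\mathrm{Orb}_G(a)$. Regrouping the outer sum by $G$-orbits of $k$ and recognizing $\sum_{k'\in\mathrm{Orb}(k)}\mathcal{D}_{x^{k'}}|_{x=1}$ as the defining operator of $I(x^k)$ converts the expansion into a linear combination of the $I(x^k)$.

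The main obstacle is the final bookkeeping: passing from the raw coefficient $\sum_{a'\in\mathrm{Orb}(a)}\prod_h c_{k_h}^{a'_h}$ to the compact product $\prod_h c_{k_h}^{a_h}$ that appears in the statement. The orbit-stabilizer theorem introduces an apparent factor $|\mathrm{Stab}(k)|/|\mathrm{Stab}(a)|$, which must cancel against the over-counting that arises from summing over all tuples $k$ rather than only orbit representatives. Verifying this telescoping exactly, for arbitrary $a$ and $k$, is the only non-routine computation in the proof; the rest of the argument is a straightforward multiplicative and group-theoretic extension of the single-variable inverse binomial transform.
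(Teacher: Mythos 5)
Your outline is already more explicit than the source: the paper offers no argument for this proposition beyond the phrase ``we combine the results,'' so there is nothing to compare step by step. The first two stages of your plan are sound. The trivial-group case does follow by multiplying the one-variable inverse binomial identities, and summing over $\mathrm{Orb}_G(a)$ does produce the coefficient $\bar c_k:=\sum_{a'\in\mathrm{Orb}_G(a)}\prod_h c^{a'_h}_{k_h}$ in front of each evaluated Hasse derivative, a quantity which is indeed constant on $G$-orbits of $k$, so the regrouping into the operators defining $I(x^{k})$ is legitimate.

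The gap is exactly the step you defer, and it cannot be closed: the stabilizer ratio does \emph{not} cancel. Reindexing over the full group gives $|\mathrm{Stab}(a)|\,\bar c_k=|\mathrm{Stab}(k)|\sum_{k'\in\mathrm{Orb}(k)}\prod_h c^{a_h}_{k'_h}$, while summing the coefficient printed in the statement over all tuples of $\mathrm{Orb}(k)$ recovers only $\tfrac{|\mathrm{Stab}(a)|}{|\mathrm{Stab}(k)|}\,\bar c_k$; hence the displayed formula is wrong whenever some $k$ with nonzero coefficient has a stabilizer different from that of $a$. Concretely, take $N=2$, $G=S_2$, $a=x_1^2x_2$, so $a^+=x_1^2x_2+x_1x_2^2$. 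Since $c^2=(0,1,2,0,\dots)$ and $c^1=(0,1,0,\dots)$, the stated formula yields $I(x_1x_2)+2I(x_1^2x_2)$, which evaluated at $w=x_1x_2$ gives $1$, whereas $a^+$ at $b=(1,1)$ equals $2$; the correct expansion is $2I(x_1x_2)+2I(x_1^2x_2)$. The correct coefficient of $I(x^{k})$, for $k$ an orbit representative, is $\bar c_k$, equivalently $\tfrac{|\mathrm{Stab}(k)|}{|\mathrm{Stab}(a)|}\prod_h c^{a_h}_{k_h}$ summed over the orbit of $k$. So either the statement must be amended by this stabilizer ratio, or one must restrict to multilinear $a$ --- where $c^{a_h}_{k_h}=\delta_{k_h,a_h}$ forces $k=a$ and the proposition collapses to the earlier lemma identifying $I(a)$ with $a^+$ for $a_i\in\{0,1\}$, which is the only case the paper subsequently uses.
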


In the rest of this paper we restrict ourselves to exponents $m_i,w_i\in
\{0,1\}$. 

Having defined and shown some properties of the combinatorial invariants, it is
time to consider the underlying mathematical structure.

\begin{definition}
$G$-poset is a pair $(\E,G)$, where $\E$ is the set of
combinatorial invariants $I(g)$ or equally the set of equivalence classes of
non-negative vectors in $\mathbb{Z}_+^N$ with respect to $G\subseteq
S_N$ which is the permutation group acting on $\mathbb{Z}_+^N$.
\end{definition}

In this paper the non-negative vectors $v$ are always associated to their
monomial representations $x_1^{v_1} \cdots x_N^{v_N}$.

This notion is intended to stress the sociological behavior of the monomials
which means that each monomial corresponds to a basic invariant which can
be evaluated in all other monomials.

We say that a set of orbit sums of monomials $\E_G$ is a \textit{complete $G$-poset} with respect to the permutation group $G$ if the following holds. 
\begin{itemize}
\item[] For all monomials $w$ appearing in the orbit sums of the $G$-poset, all the submonomials $m\subseteq w$ appear also in some orbit sum in the $G$-poset.
\end{itemize}

We define the \emph{Mnukhin-transform} or the $M$-transform of $\E$ as a matrix
$E$ with entries $e_{ij}=I(m_j)(m_i)$, where $m_i$, $i=1\ldots N$ are all the
monomials representing the orbit sums in the $G$-poset $\E$. In \cite{Thatte}
B.D.Thatte calls this $\mathcal{N}$-matrix but the difference is that it
calculates the induced subgraphs of some graph $G$. On the other hand
V.B. Mnukhin calls this \emph{the orbit inclusion matrix} but we want to emphasize the interpretation as a transform \cite{Mnukhin}.

As with graph invariants, the value of $I(\pi(m_j))(\rho(m_i))$ is independent
with respect to the permutations $\pi,\rho\in G$ and thus we may choose an arbitrary monomials in the orbit sums containing $m_i$ and $m_j$ to calculate the value of $I(m_j)(m_i)$. We always label the monomial orbit sums in the $G$-poset so that $I(m_j)(m_i)=0$ if $i<j$. However, this does not uniquely specify the order of monomials of the same degree.

\begin{example}\label{ex:c1}
Let $G=1_G$ be the trivial group. Then the set of orbit sums $\E=\{x_1,x_2,x_3,x_1x_2,x_1x_3,x_2x_3,x_1x_2x_3\}$ is a $G$-poset. The $M$-transform is 
\begin{displaymath}
E=\left( \begin{array}{ccccccc}
1 & 0 & 0 & 0 & 0 & 0 & 0  \\
0 & 1 & 0 & 0 & 0 & 0 & 0  \\
0 & 0 & 1 & 0 & 0 & 0 & 0  \\
1 & 1 & 0 & 1 & 0 & 0 & 0  \\
1 & 0 & 1 & 0 & 1 & 0 & 0  \\
0 & 1 & 1 & 0 & 0 & 1 & 0  \\
1 & 1 & 1 & 1 & 1 & 1 & 1  \\ 
\end{array} \right).
\end{displaymath}
\end{example}

\begin{example}\label{ex:c2}
Let $G=\{(1),(12)\}\cong S_2$. The set of orbit sums $\E=\{x_1+x_2,x_3,x_1x_2,x_1x_3+x_2x_3,x_1x_2x_3\}$ is a $G$-poset. The $M$-transform is 
\begin{displaymath}
E=\left( \begin{array}{ccccc}
1 & 0 & 0 & 0 & 0  \\
0 & 1 & 0 & 0 & 0  \\
2 & 0 & 1 & 0 & 0  \\
1 & 1 & 0 & 1 & 0  \\
2 & 1 & 1 & 2 & 1  \\
\end{array} \right).
\end{displaymath}
\end{example}

In this paper our focus is on $G$-posets of graphs. They appear as a special case
when $G=S_n^{(2)}$, where $S_n^{(2)}$ refers to the representation of $S_n$
with the variables $a_{ij}$. The members correspond to isomorphism classes of
graphs. For instance the set of unlabeled graphs with $n$ vertices is a
complete $G$-poset denoted by $\E(n)$. Also the set of unlabeled forests and
the set of planar graphs are complete $G$-posets. We may say that a graph $G$-poset is composed of graphs even though we actually consider it as a $G$-poset of orbit sums.

There are (at least) two natural ways to restrict general graph $G$-posets: by limiting the number of vertices in connection with the edges and by limiting the number of edges in the graph. We use notation $\E(n,d)$ to denote the set of graphs with $cv(g)\leq n$ and $|g|\leq d$. As above we may omit the degree parameter by noticing $\E(n)=\E(n,\infty)=\E(n,{n \choose 2})$. Also $\E(\infty,d)=\E(2d,d)$.

Consider the invariant $I(g)(h)$. Here we may regard $g$ and $h$ either as the adjacency matrices of the graphs, monomials of the invariants $I(g)$ and $I(h)$ or the graph isomorphism class of type $g$ and $h$.

\begin{example}
Consider the $G$-poset $\E(4)$ with graph invariants $g_0=1$, $g_1=I(a_{12})$, $g_2=I(a_{12}a_{34})$, $g_3=I(a_{12}a_{13})$, $g_4=I(a_{12}a_{23}a_{34})$, $g_5=I(a_{12}a_{13}a_{14})$, $g_6=I(a_{12}a_{13}a_{23})$, $g_7=I(a_{12}a_{23}a_{34}a_{14})$, $g_8=I(a_{12}a_{23}a_{24}a_{34})$, $g_9=I(a_{12}a_{23}a_{34}a_{14}a_{13})$, $g_{10}=I(a_{12}a_{13}a_{14}a_{23}a_{24}a_{34})$. The $M$-transform is
\begin{displaymath}
E=\left( \begin{array}{ccccccccccc}
1 & 0 & 0 & 0 & 0 & 0 & 0 & 0 & 0 & 0 & 0 \\
1 & 1 & 0 & 0 & 0 & 0 & 0 & 0 & 0 & 0 & 0 \\
1 & 2 & 1 & 0 & 0 & 0 & 0 & 0 & 0 & 0 & 0 \\
1 & 2 & 0 & 1 & 0 & 0 & 0 & 0 & 0 & 0 & 0 \\
1 & 3 & 0 & 3 & 1 & 0 & 0 & 0 & 0 & 0 & 0 \\
1 & 3 & 1 & 2 & 0 & 1 & 0 & 0 & 0 & 0 & 0 \\
1 & 3 & 0 & 3 & 0 & 0 & 1 & 0 & 0 & 0 & 0\\ 
1 & 4 & 1 & 5 & 1 & 2 & 1 & 1 & 0 & 0 & 0 \\
1 & 4 & 2 & 4 & 0 & 4 & 0 & 0 & 1 & 0 & 0 \\
1 & 5 & 2 & 8 & 2 & 6 & 2 & 4 & 1 & 1 & 0\\ 
1 & 6 & 3 & 12 & 4 & 12 & 4 & 12 & 3 & 6 & 1 \\
\end{array} \right).
\end{displaymath}
We write indices from $0$ to $10$. Thus for example $e_{9,3}=I(g_3)(g_9)=8$. 
\end{example}

In complete \emph{multilinear $G$-posets} having the reduction $x_i^2=x_i$ we have the following simple and beautiful
theorem by V.B. Mnukhin \cite{Mnukhin}.

\begin{theorem}[Mnukhin]
Let $\E$ be a complete multilinear $G$-poset. The elements of $E^k, k \in \mathbb{Z}$ are given by
\begin{equation}\label{inverse}
E^{k}_{ij}=k^{|g_i|-|g_j|}e_{ij},
\end{equation}
where $|g|$ denotes the number of edges in the graph $g$ or the degree of the monomial $g$ and the $E^{k}_{ij}$ is the $ij^{\mathrm{th}}$ entry in the matrix $E^{k}$.
\end{theorem}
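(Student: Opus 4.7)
The plan is to introduce, for each integer $k$, the matrix $M^{(k)}$ with entries $M^{(k)}_{ij} := k^{|g_i|-|g_j|}e_{ij}$ (understood to be $0$ whenever $e_{ij}=0$), and to reduce the theorem to the semigroup identity $M^{(k)}M^{(l)} = M^{(k+l)}$ for all $k,l\in\mathbb{Z}$. Once this identity is established, the theorem follows: $M^{(1)} = E$ by construction, and $M^{(0)} = I$ because $e_{ii}=1$ while $e_{ij}=0$ whenever $|g_i|=|g_j|$ and $i\neq j$ (in a multilinear $G$-poset, a subgraph with the same number of edges as its ambient graph must coincide with it). Induction then gives $M^{(k)} = E^k$ for $k\ge 0$, and $M^{(-k)}M^{(k)} = M^{(0)} = I$ forces $M^{(-k)} = E^{-k}$.

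The core step is a double count of $e_{ir}e_{rj}$. Since $I(g_j)(\cdot)$ depends only on the isomorphism class of its argument, for any subgraph $A \subseteq g_i$ with $A \cong g_r$ the number of subgraphs of $A$ isomorphic to $g_j$ equals $I(g_j)(g_r) = e_{rj}$. Hence $e_{ir}e_{rj}$ counts the chains $B \subseteq A \subseteq g_i$ with $A \cong g_r$ and $B \cong g_j$, and completeness of the $G$-poset guarantees that summing over $r$ enumerates every such chain exactly once. Exchanging the order of summation yields
\begin{equation*}
(M^{(k)}M^{(l)})_{ij} = \sum_r k^{|g_i|-|g_r|}\,l^{|g_r|-|g_j|}\,e_{ir}e_{rj} = \sum_{\substack{B \subseteq g_i\\ B \cong g_j}}\; \sum_{\substack{A\\ B \subseteq A \subseteq g_i}} k^{|g_i|-|A|}\,l^{|A|-|B|}.
\end{equation*}
Multilinearity lets one parameterize each intermediate $A$ by the subset $S := A\setminus B$ of $g_i\setminus B$, so by the binomial theorem the inner sum collapses to
\begin{equation*}
\sum_{S \subseteq g_i\setminus B} k^{|g_i|-|B|-|S|}\,l^{|S|} = (k+l)^{|g_i|-|g_j|},
\end{equation*}
independent of $B$. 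The outer sum then contributes a factor of $e_{ij}$, giving $(M^{(k)}M^{(l)})_{ij} = (k+l)^{|g_i|-|g_j|}e_{ij} = M^{(k+l)}_{ij}$, as required.

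The step I expect to be most delicate is the bookkeeping behind the chain interpretation of $e_{ir}e_{rj}$: one must invoke isomorphism-invariance of $I(g_j)(\cdot)$ to replace ``subgraphs of the abstract $g_r$'' by ``subgraphs of any particular copy $A \subseteq g_i$'', and use completeness of $\E$ to guarantee that every chain $B \subseteq A \subseteq g_i$ with $B \cong g_j$ arises for exactly one index $r$. Once this is unpacked, the rest is a clean application of the binomial theorem, and negative exponents require no separate argument beyond the semigroup property.
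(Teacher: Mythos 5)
Your proof is correct, and it is worth noting that the paper itself offers no proof of this statement at all: the theorem is quoted from Mnukhin's paper with only the citation \cite{Mnukhin} as justification, so your argument is a genuine, self-contained addition rather than a variant of something in the text. The structure is sound: the reduction to the semigroup identity $M^{(k)}M^{(l)}=M^{(k+l)}$ handles positive and negative $k$ uniformly, the identification $M^{(0)}=I$ correctly uses that $e_{ij}=I(g_j)(g_i)$ vanishes for $i\neq j$ when $|g_i|=|g_j|$, and the central double count is valid. Indeed $e_{ir}e_{rj}$ does count chains $B\subseteq A\subseteq g_i$ with $A\cong g_r$ and $B\cong g_j$, precisely because $I(g_j)(A)=I(g_j)(g_r)$ depends only on the isomorphism class of $A$; the completeness hypothesis guarantees that every submonomial $A$ of $g_i$ is represented by exactly one $g_r$ in the $G$-poset, so summing over $r$ enumerates each chain once; and multilinearity is exactly what makes the intermediate graphs $A$ with $B\subseteq A\subseteq g_i$ correspond bijectively to subsets of the edge set $g_i\setminus B$, so that the binomial theorem collapses the inner sum to $(k+l)^{|g_i|-|g_j|}$. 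The only conventions you should state explicitly are $0^0=1$ (needed both for $M^{(0)}=I$ and for the binomial identity when $k=0$ or $l=0$) and the fact that $e_{ij}\neq 0$ forces $|g_i|\geq|g_j|$, so no genuinely negative exponents ever arise in a nonzero entry; with those in place the argument is complete.
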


In particular the inverse $E^{-1}$ is given by $(-1)^{|g_i|-|g_j|}e_{ij}$. 

\section{Structure of M-transform}
We show that $M$-transform has some structure which allows at least some
redundancy in computation. Also we show one important fact about the rank of
certain minors of $M$-transforms.

\begin{lemma}\label{lem:anti}
Let $g$ be the structure of the monomial $a_{\tau_1}a_{\tau_2}\cdots a_{\tau_d}$. Then 
\begin{eqnarray}
I(\overline{g}):=\sum_{\rho \in S_n/Stab(a_{\tau_1}a_{\tau_2}\cdots a_{\tau_d}) }(1-a_{\rho(\tau_1)})(1-a_{\rho(\tau_2)})\cdots (1-a_{\rho(\tau_d)}) \\ \nonumber
=\sum_{a \subseteq g}(-1)^{|a|}\frac{I(a)(g)|Stab(a)|}{|Stab(g)|}I(a), \label{eq:kaks}
\end{eqnarray}
where $|a|$ is the number of edges in $a$ and the sum is over all unlabeled subgraphs of the graph $g$. 
\end{lemma}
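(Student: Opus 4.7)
The plan is to expand the product $\prod_{k=1}^d (1 - a_{\rho(\tau_k)})$ via the distributive law, interchange the order of summation, and then use the orbit-stabilizer theorem to convert each inner sum into a basic invariant.

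First I would write
\begin{equation}
\prod_{k=1}^d (1-a_{\rho(\tau_k)}) = \sum_{S \subseteq \{1,\ldots,d\}} (-1)^{|S|} \prod_{k\in S} a_{\rho(\tau_k)}.
\end{equation}
Substituting this into the definition of $I(\overline{g})$ and swapping the two summations gives
\begin{equation}
I(\overline{g}) = \sum_{S \subseteq \{1,\ldots,d\}} (-1)^{|S|} \sum_{\rho \in S_n/\mathrm{Stab}(g)} \prod_{k\in S} a_{\rho(\tau_k)}.
\end{equation}
For each subset $S$, let $a_S := \prod_{k\in S} a_{\tau_k}$; this is a specific labeled subgraph of the labeled graph $g$.

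The main step is to identify the inner sum with a basic invariant. Passing from cosets of $\mathrm{Stab}(g)$ to all of $S_n$ and then back to cosets of $\mathrm{Stab}(a_S)$ via the orbit--stabilizer theorem, I would argue
\begin{equation}
\sum_{\rho \in S_n/\mathrm{Stab}(g)} \rho(a_S) = \frac{1}{|\mathrm{Stab}(g)|}\sum_{\rho \in S_n} \rho(a_S) = \frac{|\mathrm{Stab}(a_S)|}{|\mathrm{Stab}(g)|}\, I(a_S).
\end{equation}
This is the delicate step because $\mathrm{Stab}(g)$ is not in general contained in $\mathrm{Stab}(a_S)$, so the rewriting has to go through the full group $S_n$ rather than a direct coset argument; one must carefully verify the multiplicities.

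Finally, I would group the subsets $S$ according to the unlabeled isomorphism class of $a_S$ as a subgraph of $g$. By the definition of the basic invariants, the number of labeled subsets $S \subseteq E(g)$ whose induced edge-subgraph is isomorphic to the unlabeled graph $a$ is exactly $I(a)(g)$. Since $|S|$ and $|\mathrm{Stab}(a_S)|$ depend only on this isomorphism class, collecting terms yields
\begin{equation}
I(\overline{g}) = \sum_{a \subseteq g} (-1)^{|a|} I(a)(g) \, \frac{|\mathrm{Stab}(a)|}{|\mathrm{Stab}(g)|}\, I(a),
\end{equation}
which is the desired identity. The main obstacle is really just keeping the double bookkeeping straight: labeled vs.\ unlabeled subgraphs on the one hand, and the passage between stabilizers of $g$, $a_S$ and the full symmetric group on the other. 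Once those are disentangled, the statement follows essentially from inclusion--exclusion on the edges.
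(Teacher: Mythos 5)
Your argument is correct and is essentially the paper's own proof made explicit: the paper simply counts that the first sum has $n!/|\mathrm{Stab}(g)|$ terms, each contributing $I(a)(g)$ monomials of $I(a)$, and divides by the $n!/|\mathrm{Stab}(a)|$ monomials of $I(a)$, which is exactly your inclusion--exclusion expansion followed by the orbit--stabilizer count. One small point: your intermediate identity $\sum_{\rho\in S_n/\mathrm{Stab}(g)}\rho(a_S)=\frac{1}{|\mathrm{Stab}(g)|}\sum_{\rho\in S_n}\rho(a_S)$ is not well defined for a single fixed $S$ (the left side depends on the choice of coset representatives, since $\mathrm{Stab}(g)$ can move $a_S$ to another labeled subgraph), but it becomes valid once you first group the $S$ by isomorphism class as you do in the final step, so the proof goes through.
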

\begin{proof}
 The number of terms in the first sum is $n!/|Stab(g)|$. Each of these terms contains $I(a)(g)$ monomials of the invariant $I(a)$. Since the number of monomials in $I(a)$ is $n!/|Stab(a)|$ we get the coefficient $\frac{I(a)(g)|Stab(a)|}{|Stab(g)|}$ for $I(a)$. 
\end{proof}

Notice that $I(g)(K_n \setminus g)=I(\overline{g})(G)$, where $g,G \in \E(n)$. Thus if
we know the values of $I(g)$ and its subinvariants $I(a), a\subseteq g$ in the graph $G$, we
know the value of $I(g)$ in $K_n \setminus G$. We can state this in a useful
manner by sorting the elements in $\E(n)$ in the following order.

Assume first that ${n \choose 2}+1$ is even. This is the number of different
degrees of graphs in $\E(n)$. For each graph $g$ of degree $d$ there is the corresponding
complement $K_n \setminus g$ of degree ${ n \choose 2}-d$. Thus by naming the
graphs as $g_1,\ldots,g_p$ up to the degree $({n \choose 2}-1)/2$ and the
remaining graphs $g_{|\E(n)|-i} \cong K_n \setminus g_i$ we get a nice labeling. 

Once we know the
$M$-transform up to the degree $({n \choose 2}-1)/2$, we can solve $e_{ij}$
for $i \geq |\E(n)|/2$ and for $j=0,1,\ldots |\E(n)|$ recursively by using
\begin{equation}
e_{ij}=\sum_{k \leq j} (-1)^{|g_k|} e_{jk}
\frac{|Stab(g_k)|}{|Stab(g_j)|} e_{|\E(n)|-i,k}.
\end{equation}

Let $N=|\E(n)|/2$. First solve  $e_{N+i,N}, i=1\ldots N$. Then $e_{N+i,N+1}$,
$e_{N+i,N+2}$ and so on until $e_{i,2N}$.

If ${n \choose 2}+1$ is odd, then there are invariants of degree ${n \choose
  2}/2$, whose complements are of same degree. Some graphs are even
  self-complement $g \cong K_n \setminus g$.

\begin{example}
Take $\E(4)$. Once we know $M$-transform up to the degree $3$ without $g_6=a_{12}a_{13}a_{23}$,
which is complement to $g_4=a_{12}a_{13}a_{14}$:
\begin{displaymath}
E^3=\left( \begin{array}{ccccccccccc}
1 & 0 & 0 & 0 & 0 & 0 & 0 & 0 & 0 & 0 & 0 \\
1 & 1 & 0 & 0 & 0 & 0 & 0 & 0 & 0 & 0 & 0 \\
1 & 2 & 1 & 0 & 0 & 0 & 0 & 0 & 0 & 0 & 0 \\
1 & 2 & 0 & 1 & 0 & 0 & 0 & 0 & 0 & 0 & 0 \\
1 & 3 & 0 & 3 & 1 & 0 & 0 & 0 & 0 & 0 & 0 \\
1 & 3 & 1 & 2 & 0 & 1 & 0 & 0 & 0 & 0 & 0 \\
\end{array} \right)
\end{displaymath}
we start by solving 
\begin{eqnarray}
e_{61}&=&\sum_{k=0}^1 (-1)^{g_k} e_{1k} \frac{|Stab(g_k)|}{|Stab(g_1)|} e_{K_n \setminus g_6,k} \\ \nonumber
&=& 4!/4 e_{5,0} - 4/4 e_{4,1} \\ \nonumber
&=& 3.
\end{eqnarray}
Next we solve 
\begin{eqnarray}
e_{71}&=&\sum_{k=0}^1 (-1)^{g_k} e_{1k} \frac{|Stab(g_k)|}{|Stab(g_1)|} e_{K_n \setminus g_7,k} \\ \nonumber
&=& 4!/4 e_{2,0} - 4/4 e_{2,1} \\ \nonumber
&=& 4
\end{eqnarray}
and so on up to $e_{10,1}$. Then we solve
\begin{eqnarray}
e_{62}&=&\sum_{k=0}^2 (-1)^{g_k} e_{2k} \frac{|Stab(g_k)|}{|Stab(g_2)|} e_{K_n \setminus g_6,k} \\ \nonumber
&=& 4!/4 e_{4,0} - e_{21}*4/4 e_{4,1} +e_{22}*e_{42}   \\ \nonumber
&=& 6-6+0 \\ \nonumber
&=& 0
\end{eqnarray}
and so on up to $e_{10,2}$. Then we continue with $e_{\cdot,3},e_{\cdot,4}
\ldots, e_{\cdot,5}$ similarly. Consider next
\begin{eqnarray}
e_{76}&=&\sum_{k=0}^6 (-1)^{g_k} e_{6k} \frac{|Stab(g_k)|}{|Stab(g_6)|} e_{K_n
  \setminus g_7,k} \\ \nonumber
&=& e_{60}(4!/3!)e_{20} - e_{61}(4/3!)e_{21} + e_{62}(4/3!)e_{22} \\ \nonumber
&=& 4-3*(4/6)*2+0 \\ \nonumber
&=& 0.
\end{eqnarray}
Here we have used $e_{60},e_{61}$ and $e_{62}$ which have been just calculated
above. Then continue up to $e_{10,10}$. 
\end{example}

Notice that while $e_{ij}$ is generally $\#P$-complete, the size of
the stabilizer $|Stab(g_i)|$ is polynomial time computable with a GI-oracle.

Consider next the case $G=1_G$. We sort the monomials in \emph{colex}-order
which allows to write easily the following recursive structure. The
colex-order means that we get all monomials of degree $\delta$ with
$n$ variables by concatenating the monomials of degree $\delta$ with $n-1$
variables with the $\delta-1$-degree monomials with $n-1$ variables multiplied
by the last variable $x_n$. For instance the list $x_1x_2,x_1x_3,x_2x_3$
extends to $x_1x_2,x_1x_3,x_2x_3,x_1x_4,x_2x_4,x_3x_4$. 

Denote by $E_\delta^\Delta(n)$ the minor in the
$M$-transform of the multilinear $G$-poset $\E$ with $n$ variables such that it contains the elements $e_{ij}$
s.t. $|g_j|=\delta$ and $|g_i|=\Delta$. Notice that with $\E(n)$ we are talking
about $E_\delta^\Delta({n \choose 2})$-minors.

\begin{lemma}\label{prangi1}
When $G=1_G$ and $1 \leq \delta \leq \Delta \leq n$, we have 
\begin{equation}
E_\delta^\Delta(n)= \left[ \begin{array}{ll} 
E_\delta^\Delta(n-1) & 0 \\ 
E_{\delta}^{\Delta-1}(n-1) & E_{\delta-1}^{\Delta-1}(n-1) \end{array} \right].
\end{equation}
Moreover the rank of $E_\delta^\Delta(n) \in \mathbb{Z}^{s \times t}$ is $\mathrm{min}(s,t)$.
\end{lemma}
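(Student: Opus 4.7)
I would start by unfolding what $E_\delta^\Delta(n)$ means concretely when $G=1_G$: the differential operator $\mathcal{D}_{m_j}$ picks out exactly the variables appearing in $m_j$, so in the multilinear setting $I(m_j)(m_i)=1$ iff every variable of $m_j$ appears in $m_i$, and is $0$ otherwise. Hence $E_\delta^\Delta(n)$ is simply the $\binom{n}{\Delta}\times\binom{n}{\delta}$ set-containment matrix whose $(R,C)$ entry (with $|R|=\Delta,|C|=\delta$) is $[C\subseteq R]$. The colex order lists the $k$-subsets of $[n]$ as those contained in $[n-1]$ first, followed by those of the form $S\cup\{n\}$ with $S\subseteq[n-1]$ a $(k-1)$-subset. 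A four-case check on whether $n$ lies in $R$ and/or $C$ then reads off the four blocks: both exclude $n$ gives $E_\delta^\Delta(n-1)$; $n\notin R$ but $n\in C$ automatically violates $C\subseteq R$ and produces the zero block; $n\in R$, $n\notin C$ collapses to containment of $C$ in $R\setminus\{n\}$, i.e.\ $E_\delta^{\Delta-1}(n-1)$; and $n$ in both reduces to $E_{\delta-1}^{\Delta-1}(n-1)$ after stripping $n$ from each side.

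For the rank claim I would proceed by induction on $n$. The involution $A\mapsto[n]\setminus A$ identifies $E_\delta^\Delta(n)$ (up to row/column permutations) with $E_{n-\Delta}^{n-\delta}(n)^T$, and $\delta+\Delta\le n$ iff $(n-\Delta)+(n-\delta)\ge n$, so handling one of the two cases at each level $n$ takes care of the other for free. I assume therefore $\delta+\Delta\le n$, i.e.\ $\binom{n}{\delta}\le\binom{n}{\Delta}$, and aim for full column rank. A null combination, split as $(\alpha,\beta)$ in the colex block partition, satisfies
\begin{align*}
&\text{(i)}\quad E_\delta^\Delta(n-1)\,\alpha=0,\\
&\text{(ii)}\quad E_\delta^{\Delta-1}(n-1)\,\alpha+E_{\delta-1}^{\Delta-1}(n-1)\,\beta=0.
\end{align*}
The key step is to multiply (ii) from the left by $E_{\Delta-1}^\Delta(n-1)$ and apply the chain identity $E_k^m E_\ell^k=\binom{m-\ell}{k-\ell}E_\ell^m$ (an immediate count of intermediate sets $B\subseteq C\subseteq A$). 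This yields $(\Delta-\delta)\,E_\delta^\Delta(n-1)\,\alpha+(\Delta-\delta+1)\,E_{\delta-1}^\Delta(n-1)\,\beta=0$, which by (i) collapses to $E_{\delta-1}^\Delta(n-1)\,\beta=0$. Since $(\delta-1)+\Delta\le n-1$, the induction hypothesis at level $n-1$ gives this matrix full column rank, so $\beta=0$; plugging back into (ii) leaves $E_\delta^{\Delta-1}(n-1)\,\alpha=0$, and since $\delta+(\Delta-1)\le n-1$ another application of induction forces $\alpha=0$.

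The main obstacle, and the reason the chain identity is really needed, is that the naive block-triangular estimate only gives
\begin{equation*}
\mathrm{rank}\,E_\delta^\Delta(n)\ge\min\!\bigl(\tbinom{n-1}{\Delta},\tbinom{n-1}{\delta}\bigr)+\min\!\bigl(\tbinom{n-1}{\Delta-1},\tbinom{n-1}{\delta-1}\bigr),
\end{equation*}
which is strictly less than $\min(\binom{n}{\Delta},\binom{n}{\delta})$ in the boundary cases (most visibly when $\delta+\Delta=n$ with $\Delta>(n-1)/2$, where neither block is square in the right orientation). Treating (i) and (ii) independently by induction therefore cannot close the argument; one has to couple them through the up-map $E_{\Delta-1}^\Delta(n-1)$, which replaces the top block by a copy of $E_{\delta-1}^\Delta(n-1)$ whose column count $\binom{n-1}{\delta-1}$ has become small enough (because $(\delta-1)+\Delta\le n-1$) to be handled by the inductive hypothesis. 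Once this coupling is in place the rest is straightforward, and combined with the trivial upper bound $\mathrm{rank}\le\min(s,t)$ it delivers equality.
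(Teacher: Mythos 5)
Your block decomposition is exactly the paper's: both identify $E_\delta^\Delta(n)$, for the trivial group, with the subset-inclusion matrix and read off the four blocks from the colex split into subsets avoiding or containing the last variable. Where you genuinely diverge is the rank claim. The paper disposes of it in one sentence ("the rank is obviously as large as possible by the recursive structure"), i.e.\ it implicitly relies on the naive block-triangular bound $\mathrm{rank}\,E_\delta^\Delta(n)\ge \mathrm{rank}\,E_\delta^\Delta(n-1)+\mathrm{rank}\,E_{\delta-1}^{\Delta-1}(n-1)$ together with the base cases $E_\delta^\delta(n)=I$ and $E_\delta^n(n)=[1,\dots,1]^T$. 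As you correctly observe, that bound undershoots $\min(s,t)$ near the boundary: for instance $E_1^3(4)$ is $4\times4$ of rank $4$, while its diagonal blocks $E_1^3(3)$ and $E_0^2(3)$ are $1\times3$ and $3\times1$ and contribute only $2$. So the paper's rank argument is, taken literally, incomplete. Your fix --- reducing to the full-column-rank case via the complementation involution, then coupling the two block equations through the up-map $E_{\Delta-1}^\Delta(n-1)$ and the chain identity $E_k^mE_\ell^k=\binom{m-\ell}{k-\ell}E_\ell^m$ --- is the classical Gottlieb/Kantor argument for inclusion matrices, and it checks out: the computation gives $(\Delta-\delta)E_\delta^\Delta(n-1)\alpha+(\Delta-\delta+1)E_{\delta-1}^\Delta(n-1)\beta=0$, the degree conditions $(\delta-1)+\Delta\le n-1$ and $\delta+(\Delta-1)\le n-1$ keep the induction inside the full-column-rank regime, and the degenerate cases $\delta=\Delta$ (identity matrix) and $\delta=1$ (where $E_{\delta-1}^\Delta(n-1)$ has a single column) are trivial and worth a sentence in a final write-up. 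In short, your proof is not merely a different route: it supplies the step the paper asserts without justification, at the modest cost of one extra lemma and the complementation symmetry.
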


\begin{proof}
Since $E_\delta^n(n)=[1,1,\ldots,1]^T \in \mathbb{Z}^{{n \choose \delta}}$ and
$E_\delta^\delta(n)=I$ the recursion is fully determined. 

The part $E_\delta^\Delta(n-1)$ corresponds to the monomials without the last
variable $x_n$. 

The part $E_{\delta}^{\Delta-1}(n-1)$ corresponds to the
monomials of degree $\Delta$ with the last variable $x_n$ and the evaluated
monomials of degree $\delta$ without the last variable $x_n$.

The part $E_{\delta-1}^{\Delta-1}(n-1)$ corresponds to the monomials of both
degrees with the last variable.

The rank is obviously as large as possible by the recursive structure.
\end{proof}

\begin{proposition}\label{prangi}
If $G$ is a general permutation group and $\E$ any complete $G$-poset with
respect to $G$, then $E_\delta^{\Delta}(n) \in \mathbb{Z}^{s \times t}$ has
rank $\mathrm{min}(s,t)$.
\end{proposition}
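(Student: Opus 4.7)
The plan is to prove Proposition~\ref{prangi} by reducing to Lemma~\ref{prangi1} via a Reynolds-operator averaging argument. Concretely, I would realise $E_\delta^\Delta$, up to a positive diagonal rescaling, as the matrix of a natural $G$-equivariant boundary map restricted to $G$-invariants, and then transfer the rank statement from the trivial-group case already established in Lemma~\ref{prangi1}.

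Setup: let $\mathcal{W}_k$ denote the $\mathbb{Q}$-vector space spanned by all multilinear monomials appearing in some orbit of $\E$ of degree $k$; the group $G$ acts on $\mathcal{W}_k$, and the $G$-invariant subspace $\mathcal{V}_k := \mathcal{W}_k^G$ has as basis the orbit sums $E_O = \sum_{m \in O} m$ indexed by $G$-orbits $O \in \E$ at degree $k$, so $\dim\mathcal{V}_\Delta = s$ and $\dim\mathcal{V}_\delta = t$. Define the boundary map
\[
\partial\colon\mathcal{W}_\Delta\longrightarrow\mathcal{W}_\delta,\qquad \partial(m)=\sum_{m'\subseteq m,\ |m'|=\delta}m'.
\]
Completeness of $\E$ guarantees $\partial(\mathcal{W}_\Delta)\subseteq\mathcal{W}_\delta$, and $G$-equivariance of subset inclusion gives a restriction $\partial^G\colon\mathcal{V}_\Delta\to\mathcal{V}_\delta$. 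A short orbit-stabilizer / double-counting calculation shows
\[
\partial(E_{O_i})=\sum_{O_j}\frac{|O_i|}{|O_j|}\,e_{ij}\,E_{O_j},
\]
so the matrix of $\partial^G$ in the orbit-sum bases equals $D_1^{-1}(E_\delta^\Delta)^\top D_2$ for positive diagonal matrices $D_1=\mathrm{diag}(|O_j|)$ and $D_2=\mathrm{diag}(|O_i|)$; in particular $\operatorname{rank}\partial^G=\operatorname{rank} E_\delta^\Delta$.

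By Lemma~\ref{prangi1} applied to the trivial group (whose $M$-transform is precisely the matrix of $\partial$ in the monomial basis), $\partial$ itself has rank $\min(\dim\mathcal{W}_\Delta,\dim\mathcal{W}_\delta)$ and is therefore either injective or surjective. If $\partial$ is injective, so is its restriction $\partial^G$, which forces $s\le t$ and gives $\operatorname{rank}\partial^G=s=\min(s,t)$. If $\partial$ is surjective, then for any $v\in\mathcal{V}_\delta$ pick a preimage $u\in\mathcal{W}_\Delta$ with $\partial u=v$; the Reynolds average $Ru=\frac{1}{|G|}\sum_{g\in G}g u$ lies in $\mathcal{V}_\Delta$ and satisfies $\partial(Ru)=R(\partial u)=Rv=v$, so $\partial^G$ is surjective, $t\le s$, and $\operatorname{rank}\partial^G=t=\min(s,t)$. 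Either way $\operatorname{rank} E_\delta^\Delta=\min(s,t)$.

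The main obstacle is the identification of $E_\delta^\Delta$ as the matrix of the restricted boundary map, which requires careful orbit-stabilizer bookkeeping of how the factors $|O_i|/|O_j|$ enter the entries; once that identification is pinned down, the Reynolds averaging is a standard piece of invariant theory. It operates over $\mathbb{Q}$, which suffices because the rank of an integer matrix is a $\mathbb{Q}$-invariant, so no hypothesis on $|G|$ is needed. A mild secondary technicality is that Lemma~\ref{prangi1} is stated for the full multilinear poset on $n$ variables, whereas $\mathcal{W}_\Delta$ may be a proper subspace when $\E$ is a proper complete sub-poset; the same block recursion passes verbatim to this case because completeness is exactly what produces the upper-right zero block in the decomposition.
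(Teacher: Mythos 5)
Your argument is correct in the setting the paper actually uses, and it takes a genuinely different route from the paper's own proof. The paper argues by \emph{contraction}: starting from the trivial-group matrix of Lemma~\ref{prangi1}, it sums the columns lying in a common $G$-orbit, deletes all but one representative row from each row-orbit, and asserts that this ``maintains the maximal rank.'' That assertion is not a general linear-algebra fact: summing columns within groups is right-multiplication by a non-invertible $0$--$1$ matrix and can drop the rank below the minimum of the \emph{new} dimensions even for non-negative full-rank matrices, e.g. $\left(\begin{smallmatrix}1&1&0\\1&0&1\end{smallmatrix}\right)$ has rank $2=\min(2,3)$ but becomes $\left(\begin{smallmatrix}1&1\\1&1\end{smallmatrix}\right)$ of rank $1<\min(2,2)$ after summing its last two columns. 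So the paper's proof relies on structure it never invokes, whereas your boundary-operator argument supplies exactly the missing justification: identifying $E_\delta^\Delta$, up to invertible positive diagonal scaling, with the matrix of $\partial$ restricted to $G$-invariants (your orbit-stabilizer computation of the coefficients $|O_i|e_{ij}/|O_j|$ is correct), and then transferring injectivity by restriction and surjectivity by Reynolds averaging of preimages. This is the classical Livingstone--Wagner-style argument, and it yields a complete proof where the paper offers a plausibility argument.

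One caveat, which is as much a defect of the proposition's statement as of your final paragraph: the claim that Lemma~\ref{prangi1} ``passes verbatim'' to a proper complete sub-poset is false, and so is the proposition at the stated level of generality (``any complete $G$-poset''). Take $G$ trivial and the downward-closed family on five variables whose degree-two level is $\{x_1x_2,x_1x_3,x_1x_4,x_2x_3,x_2x_4\}$ and whose degree-one level is all of $x_1,\dots,x_5$: this is complete in the paper's sense, yet $E_1^2$ is $5\times 5$ with a zero column at $x_5$ (it divides no degree-two member), so its rank is $4<\min(5,5)$. The full-rank property genuinely requires the full multilinear poset of all monomials (respectively all $G$-orbits), which is the only case Lemma~\ref{prangi1} establishes and the only case used later in the paper; restricted to that setting, your proof is complete and preferable to the paper's.
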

\begin{proof}
We start with $E_\delta^\Delta(n)$ and the trivial group. When we introduce the
symmetries from $G$, the original $E_\delta^\Delta(n)$ \emph{contracts} in the
following way:
\begin{itemize}
\item[i] All columns in the same orbit will be summed to one representative column.
\item[ii] All rows in the same orbit will be punctured, save one representative.
\end{itemize}

It is clear that once we begin with the matrix of maximal rank (with respect
to its dimensions), the contraction operation maintains the maximal rank.
\end{proof}

\section{Product Formula Based on M-transform}\label{sec:se5}
The inverse formula is useful in the calculation of products of graph invariants in the $G$-poset $\E$. Although the following product formula is general for all multilinear $G$-posets, we use the terminology of graph theory in this section. 
\begin{lemma}\label{lem:l1}
The polynomials in ${\mathbb C}[x_1,x_2,\ldots,x_n]/\langle x_i^2-x_i \rangle$ are in 1-1 correspondence with the values of the polynomials in $\{0,1\}^n$.
\end{lemma}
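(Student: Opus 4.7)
The plan is to exhibit the evaluation map as the bijection, then verify both directions by a simple dimension count combined with an explicit construction of indicator polynomials.

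First I would define the evaluation map
\begin{equation}
\mathrm{ev}\colon \mathbb{C}[x_1,\ldots,x_n]/\langle x_i^2-x_i\rangle \longrightarrow \mathbb{C}^{\{0,1\}^n},\qquad [p]\mapsto \bigl(v\mapsto p(v)\bigr).
\end{equation}
This is well-defined on the quotient because each generator $x_i^2-x_i$ of the ideal vanishes identically on $\{0,1\}^n$: for $v_i\in\{0,1\}$ one has $v_i^2=v_i$. It is obviously $\mathbb{C}$-linear, so the whole question reduces to comparing the two sides as finite-dimensional $\mathbb{C}$-vector spaces.

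Next I would bound the source from above. Using the relations $x_i^2=x_i$ repeatedly, every monomial $x_1^{a_1}\cdots x_n^{a_n}$ reduces modulo the ideal to the squarefree monomial $x_1^{\min(a_1,1)}\cdots x_n^{\min(a_n,1)}$. Hence the $2^n$ squarefree monomials $\{\prod_{i\in S}x_i : S\subseteq\{1,\ldots,n\}\}$ span the quotient, giving
\begin{equation}
\dim_{\mathbb{C}}\bigl(\mathbb{C}[x_1,\ldots,x_n]/\langle x_i^2-x_i\rangle\bigr)\le 2^n = \dim_{\mathbb{C}} \mathbb{C}^{\{0,1\}^n}.
\end{equation}

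Then I would show surjectivity of $\mathrm{ev}$ by exhibiting characteristic polynomials. For each $v\in\{0,1\}^n$ set
\begin{equation}
e_v := \prod_{i:\,v_i=1} x_i \cdot \prod_{i:\,v_i=0}(1-x_i).
\end{equation}
A direct check shows $e_v(v)=1$ and $e_v(w)=0$ for $w\neq v$, so the functions $\mathrm{ev}(e_v)$ are the standard basis of $\mathbb{C}^{\{0,1\}^n}$. Thus $\mathrm{ev}$ is surjective. Combined with the dimension inequality above, this forces $\dim$(source)$=2^n$ exactly and $\mathrm{ev}$ to be a bijection; consequently the squarefree monomials form a basis of the quotient, and every class of polynomial is uniquely determined by its values on $\{0,1\}^n$.

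The only mild obstacle is that the statement conflates ``polynomials'' with equivalence classes in the quotient; making this precise is exactly what the evaluation map does, and the rest is a standard dimension-and-basis argument with the indicator polynomials $e_v$ carrying the construction.
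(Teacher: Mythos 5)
Your proof is correct, but it takes a genuinely different route from the paper's. The paper argues by induction on the number of variables: for $n=1$ the map sending the coefficient vector of $c_0+c_1x_1$ to its values at $0$ and $1$ is the lower unitriangular matrix $E_1=\left(\begin{smallmatrix}1&0\\1&1\end{smallmatrix}\right)$, and adjoining a new variable via $p_n=p_{n-1}+p_{n-1}x_n$ makes the evaluation matrix the Kronecker product $E_n=E_{n-1}\otimes E_1$, which is invertible because $E_1$ is. You instead run the standard dimension-and-surjectivity argument: the $2^n$ squarefree monomials span the quotient, and the indicator polynomials $e_v=\prod_{i:v_i=1}x_i\prod_{i:v_i=0}(1-x_i)$ hit the standard basis of $\mathbb{C}^{\{0,1\}^n}$, so the evaluation map is onto and hence an isomorphism. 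Both arguments are complete. What the paper's version buys is the explicit transform matrix and its explicit inverse $E_n^{-1}=(E_1^{-1})^{\otimes n}$ with entries of sign $(-1)^{|S|-|T|}$, which is precisely the structure exploited in the surrounding sections on the $M$-transform and Mnukhin's inversion formula; your indicator polynomials encode the same inverse implicitly (expanding $e_v$ is the inclusion--exclusion that inverts $E_n$), and as a standalone proof yours is arguably cleaner since it needs no choice of monomial ordering.
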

\begin{proof}
By induction we find the evaluation isomorphism of coefficients of the multilinear monomials and the values of the polynomials over $\{0,1\}^n$. Let $n=1$. Clearly the matrix 
\begin{displaymath}
E_1=
\left( \begin{array}{cc}
1 & 0 \\
1 & 1 
\end{array} \right)
\end{displaymath}
maps the coefficients of $p_1=c_0+c_1x_1$ in this order to the values of the polynomial over $\{0,1\}^n$. 
By adding the new variable $x_n$, the general polynomial $p_{n-1}$ becomes $p_n=p_{n-1}+p_{n-1}x_n$, where the part $p_{n-1}x_n$ has new coefficients. The corresponding evaluation isomorphism is obtained by $E_n=E_{n-1} \otimes E_1$ and is clearly invertible.
\end{proof}

Let $|\E|$ denote the number of members in the $G$-poset $\E$. Consider the vector $[I(g_i)(g)\cdot I(g_j)(g)]$, where $g$ runs through all the graphs in the $G$-poset and $g_i$ and $g_j$ are members of the $G$-poset. By calculating the inverse transform $c_{ij}=E^{-1}[I(g_i)(g) \cdot I(g_j)(g)],c_{ij}\in \mathbb{Z}^{|\E|}$ we obtain the linear combination $\sum_k c_{ij}^k I(g_k)$ of invariants in $\E$ such that $Ec_{ij}=[I(g_i)(g)\cdot I(g_j)(g)]$ and thus 
\begin{equation}
[I(g_i)(g)\cdot I(g_j)(g)]=\sum_k c_{ij}^k e_k= \sum_k c_{ij}^k[I(g_k)(g)], \ \forall g \in \E, \label{vekid}
\end{equation}
where $e_k$ is the $k^{\mathrm{th}}$ column of $E$. By Lemma \ref{lem:l1},
distinct polynomials in 
\begin{equation}
{\mathbb C}[x_1,x_2,\ldots,x_n]/\langle x_i^2-x_i \rangle
\end{equation}
obtain different values when evaluated in $\{0,1\}^n$. We know that the invariants obtain the same value in orbits of vectors in $\{0,1\}^n$ over the permutation group $G$. Since the orbits of $G$ divide the whole of $\{0,1\}^n$ into orbit sets covering the whole $\{0,1\}^n$, it is sufficient to check the members of $\E$ against the points $g \in \E$.

Thus by (\ref{vekid}) we have the following result.

\begin{theorem}[Mnukhin]\label{the:xyz}
 The product of two graph invariants in $\E$ equals the following linear combination of invariants
\begin{equation}\label{eq:29}
I(g_i)I(g_j)=\sum_{k=1}^N  c^k_{ij} I(g_k).
\end{equation}
where
\begin{equation}
c^k_{ij}=\sum_{h=1}^N (-1)^{|g_k|-|g_h|}e_{kh}e_{hi}e_{hj}
\end{equation}
\end{theorem}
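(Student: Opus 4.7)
The plan is to reduce the identity to an evaluation problem via Lemma \ref{lem:l1}, convert it to a linear system governed by the $M$-transform $E$, and then invert using Mnukhin's closed form for $E^{-1}$. Since both $I(g_i)I(g_j)$ and any linear combination $\sum_k c_{ij}^k I(g_k)$ live in the invariant ring $\bigl(\mathbb{C}[x_1,\ldots,x_n]/\langle x_i^2-x_i\rangle\bigr)^G$, Lemma \ref{lem:l1} tells me that equality of these two polynomials is equivalent to equality of their values at every point of $\{0,1\}^n$. Because both sides are $G$-invariant, it suffices to check equality on a system of orbit representatives, and such a system is exactly provided by the members $g_1,\ldots,g_N$ of the complete $G$-poset $\E$.

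Next I would encode the desired identity as the matrix equation
\begin{equation}
E\,c_{ij} \;=\; v_{ij}, \qquad (v_{ij})_h := I(g_i)(g_h)\,I(g_j)(g_h) = e_{hi}\,e_{hj},
\end{equation}
where $c_{ij}=(c_{ij}^1,\ldots,c_{ij}^N)^T$ is the vector of sought coefficients and the $h$-th row of $E$ records the values $I(g_k)(g_h)=e_{hk}$. This is exactly the content of equation (\ref{vekid}) in the paragraph preceding the theorem: once $c_{ij}$ satisfies the linear system for every $g_h \in \E$, the invariant polynomial $I(g_i)I(g_j)-\sum_k c_{ij}^k I(g_k)$ vanishes on a complete set of orbit representatives, hence on all of $\{0,1\}^n$, hence is identically zero by Lemma \ref{lem:l1}.

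Now I apply Mnukhin's inverse formula (\ref{inverse}), which gives $E^{-1}_{kh} = (-1)^{|g_k|-|g_h|}e_{kh}$. Solving $c_{ij}=E^{-1}v_{ij}$ componentwise yields
\begin{equation}
c_{ij}^k \;=\; \sum_{h=1}^N E^{-1}_{kh}\,(v_{ij})_h \;=\; \sum_{h=1}^N (-1)^{|g_k|-|g_h|}\,e_{kh}\,e_{hi}\,e_{hj},
\end{equation}
which is precisely the claimed formula. The main conceptual step is the reduction to pointwise equality on $\E$; once that is in place, everything else is a direct substitution into a formula already available from earlier results.

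I do not anticipate a serious obstacle: Lemma \ref{lem:l1} handles the passage from polynomial identity to pointwise evaluation, completeness of the $G$-poset $\E$ ensures that orbit representatives of all supports that can appear in the product are present (so that $v_{ij}$ really does determine the linear combination uniquely), and Mnukhin's theorem supplies the explicit inverse. The only subtlety worth flagging in the write-up is the indexing convention $I(g_k)(g_h)=e_{hk}$, which must be used consistently so that the columns of $E$ correspond to the basic invariants being summed while the rows correspond to evaluation points.
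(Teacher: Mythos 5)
Your proposal is correct and follows essentially the same route as the paper: evaluate the product on the members of $\E$, set up the linear system $Ec_{ij}=[e_{hi}e_{hj}]_h$, invoke Lemma \ref{lem:l1} together with $G$-invariance and completeness of $\E$ to upgrade pointwise agreement to a polynomial identity, and read off $c_{ij}^k$ from Mnukhin's inverse formula (\ref{inverse}). Your remark on the indexing convention $I(g_k)(g_h)=e_{hk}$ is a useful clarification but does not change the argument.
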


As a special case Theorem \ref{the:xyz} gives a formula for the product of
graph invariants in a $G$-poset. By Lemma \ref{lem:lkjh} we know that by
selecting a sufficiently large $G$-poset, the product formula will hold.

We could have stated actually that the product of two members of the $G$-poset $\E$ equals the linear combination of the members in the $G$-poset. However, we hope that this generality is obvious for the reader and needs no further treatment.

 Let us restrict ourselves to the $G$-poset with $n$ vertices, $\E(n)$. Theorem \ref{the:tulo} also gives us a formula for the products of graph invariants i.e.
\begin{equation}\label{eq:31}
I(g_i)I(g_j)=\sum_{\rho\in[\mathrm{Stab}(g_i):S_n:\mathrm{Stab}(g_j)]} \frac{|\mathrm{Stab}(\widehat{g_i\rho g_j})|}{|\mathrm{Stab}(g_i) \cap \rho\mathrm{Stab}(g_j)|}I(\widehat{g_i \rho g_j}),
\end{equation}
where we have used $g_i,g_j$ instead of $A,B$ for clarity. Notice that this product is the product in $\E(n)$, since the monomials $g_i$ and $g_j$ consist of variables in adjacency matrices where the number of vertices is $n$.

Since the products (\ref{eq:29}) and (\ref{eq:31}) are equal, by collecting the coefficients isomorphic to $g_k$ we have
\begin{eqnarray}
\sum_{\rho\in[\mathrm{Stab}(g_i):S_n:\mathrm{Stab}(g_j)],\widehat{g_i\rho g_j}\cong g_k}  \frac{|\mathrm{Stab}(\widehat{g_i \rho g_j})|}{|\mathrm{Stab}(g_i) \cap \rho \mathrm{Stab}(g_j)|}\\ \nonumber
= \sum_{h=1}^N (-1)^{|g_k|-|g_h|}e_{kh}e_{hi}e_{hj}.
\end{eqnarray}

\begin{example}
Consider again the product $I(g_3)^2=I(a_{12}a_{13})^2$ in $\E(4)$. The new product formula gives the coefficient of $I(g_8)=I(a_{12}a_{23}a_{24}a_{34})$ by 
\begin{eqnarray}
c_{33}^8 &=&\sum_{h=1}^{10} (-1)^{e_{81}-e_{h1}} e_{8h}e_{h3}e_{h3} \\ \nonumber
&=&\sum_{h=3}^{10} (-1)^{e_{81}-e_{h1}} e_{8h}e_{h3}e_{h3} \\ \nonumber
&=&e_{83}e_{33}^2-e_{84}e_{43}^2-e_{85}e_{53}^2-e_{86}e_{63}+e_{87}e_{73}^2+e_{88}e_{83}^2 \\ \nonumber
&=&4-0-4\cdot 2^2-0+0+4^2\\ \nonumber
&=&4,
\end{eqnarray}
where we used the relation $|g_h|=e_{h1}$. The new product formula gives directly the coefficient $4$ for the $I(a_{12}a_{23}a_{24}a_{34})$ compared with the Example 5, where the coefficient was split up in two isomorphic terms.
\end{example}

In fact, consider any invariant $f(g)$ over a $G$-poset $\E$. Specifically, $f(g)$ is not necessarily a member of $\E$. For example $f(g)$ can be the maximal eigenvalue of the adjacency matrix of $g$, the chromatic number of $g$ or an integer representation of the canonical permutation of the graph \cite{McKay}.

 We can represent $f(g)$ as a linear combination of the basic graph invariants. This is done by evaluating $f$ over $\E$, which gives us a vector $v=[f(g_0),f(g_1),\ldots,f(g_N)]^T$, where $g_0,\ldots,g_N$ are the graphs in $\E$. The linear combination of the basic graph invariants equivalent to $f$ is now 
\begin{equation}
\sum_{i=0}^N c_i I(g_i),
\end{equation}
where $c=E^{-1}v$. Thus the study of graph invariants of a given $G$-poset of simple graphs can be reduced to the properties of the basic graph invariants.

\begin{proposition}
Let $c_{ij}^k$ be defined by 
\begin{equation}
c_{i,j}^k=\sum_{h=1}^N (-1)^{|g_k|-|g_h|}e_{kh}e_{hi}e_{hj}.
\end{equation}
Then the matrix $E$ can be recovered from $c_{ij}^k$ via
\begin{equation}
e_{ij}=c_{ij}^i.
\end{equation}
\end{proposition}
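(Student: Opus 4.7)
The identity to prove, $e_{ij}=c_{ij}^i$, is a triangularity statement dressed up in algebraic clothing: $c_{ij}^i$ is the coefficient on $I(g_i)$ in the expansion of $I(g_i)I(g_j)$, and we want this to equal the number $I(g_j)(g_i)$ of subgraph copies of $g_j$ inside $g_i$. The most efficient route is to use the product formula of Theorem \ref{the:xyz} and then evaluate at a well-chosen point; the formula $c_{ij}^k=\sum_h(-1)^{|g_k|-|g_h|}e_{kh}e_{hi}e_{hj}$ is then used only as a tool to verify that certain coefficients vanish.

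\textbf{Step 1: evaluate the product formula at $g_i$.} Starting from the polynomial identity
\begin{equation}
I(g_i)\,I(g_j)=\sum_{k=1}^{N} c_{ij}^k\, I(g_k),
\end{equation}
substitute the adjacency values of the graph $g_i$. Since $e_{ii}=I(g_i)(g_i)=1$ (the only subgraph of $g_i$ with $|g_i|$ edges is $g_i$ itself), the left-hand side collapses to $1\cdot I(g_j)(g_i)=e_{ij}$. The right-hand side becomes $\sum_{k} c_{ij}^k\,e_{ik}$.

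\textbf{Step 2: the triangular collapse.} It now remains to show that only the term $k=i$ survives. On the one hand, $e_{ik}\neq 0$ forces $g_k$ to embed as a subgraph of $g_i$, hence $|g_k|\le|g_i|$. On the other hand, inspection of the explicit formula $c_{ij}^k=\sum_{h}(-1)^{|g_k|-|g_h|}e_{kh}e_{hi}e_{hj}$ shows that if $c_{ij}^k\neq 0$ then there exists some $h$ with $e_{kh}$, $e_{hi}$ and $e_{hj}$ all nonzero, i.e.\ $g_i\subseteq g_h\subseteq g_k$ in the subgraph-containment order; in particular $|g_k|\ge |g_i|$. Combining the two inequalities yields $|g_k|=|g_i|$ together with $g_k\subseteq g_i$ and $g_i\subseteq g_k$, which (since each orbit-sum index represents a distinct isomorphism class) forces $k=i$.

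\textbf{Step 3: conclude.} With only the $k=i$ term contributing and $e_{ii}=1$, we obtain $e_{ij}=c_{ij}^i\cdot e_{ii}=c_{ij}^i$, which is the claim. The only delicate point is Step 2, and in particular the verification that $c_{ij}^k$ is supported on those $g_k$ containing both $g_i$ and $g_j$; this is where transitivity of subgraph containment across the intermediate index $h$ does all the work, and without it the collapse would not be forced purely from the triangular convention on $E$.
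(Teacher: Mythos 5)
Your proof is correct, but it takes a genuinely different route from the paper's. The paper never touches the product formula: it simply substitutes $k=i$ into the defining expression $c_{ij}^i=\sum_h(-1)^{|g_i|-|g_h|}e_{ih}e_{hi}e_{hj}$ and observes that the triangularity convention ($e_{hi}=0$ for $h<i$, $e_{ih}=0$ for $h>i$, $e_{ii}=1$) kills every term except $h=i$, leaving $e_{ii}^2e_{ij}=e_{ij}$ in two lines. You instead invoke Theorem \ref{the:xyz} to write $I(g_i)I(g_j)=\sum_k c_{ij}^k I(g_k)$, evaluate both sides at $g_i$, and then run a collapse argument on the summation index $k$ (rather than $h$) via transitivity of subgraph containment. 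Both arguments are sound; note only that your Step 2 could be shortened, since the same index convention the paper uses already gives $k\ge h\ge i$ from $c_{ij}^k\neq 0$ and $i\ge k$ from $e_{ik}\neq 0$, without appealing to mutual embeddability of finite graphs. What your version buys is a conceptual reading of the identity --- $c_{ij}^i$ is the coefficient of $I(g_i)$ in $I(g_i)I(g_j)$, and evaluation at $g_i$ extracts it --- at the cost of importing the product formula as a dependency where the paper needs none.
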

\begin{proof}
We have $e_{hi}=0$ if $h<i$ and $e_{ih}=0$ if $h > i$ and $e_{ii}=1$. Thus
\begin{eqnarray}
c_{ij}^i=\sum_{h=1}^N (-1)^{|g_i|-|g_h|}e_{ih}e_{hi}e_{hj} \\ \nonumber
=(-1)^{0}e_{ii}e_{ii}e_{ij}= \\ \nonumber
=e_{ij}.
\end{eqnarray}
\end{proof}

\begin{corollary}
The coefficients $c_{ij}^k$ are $\#P$-complete.
\end{corollary}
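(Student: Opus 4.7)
The plan is to establish the two usual ingredients for $\#P$-completeness separately: membership in $\#P$ and $\#P$-hardness. The hardness direction is where the preceding proposition does the real work.

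For membership, I would argue that each $c_{ij}^k$ is a non-negative integer that counts a natural combinatorial object. The cleanest route is to appeal to the Kocay-style interpretation mentioned after the proof of Theorem \ref{the:tulo}: $c_{ij}^k$ equals the number of ordered pairs $(C,D)$ of subgraphs of a fixed representative $g_k$ with $C\cong g_i$, $D\cong g_j$ and $C\cup D=g_k$. Since each such pair has an $O(|g_k|)$-bit description and membership in the set can be tested in polynomial time (given a GI oracle for recognising the isomorphism type, but in fact using only the comparison $C\cup D = g_k$ which is trivial), $c_{ij}^k$ is in $\#P$. Alternatively, one can read off membership directly from the Fleischmann formula of Theorem~\ref{the:tulo}, where each term is a ratio of stabiliser orders that collapses to a non-negative integer count.

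For hardness, I would invoke the preceding proposition: $e_{ij}=c_{ij}^i$. The entry $e_{ij}=I(g_j)(g_i)$ is exactly the number of subgraphs of $g_i$ isomorphic to $g_j$. Counting subgraphs isomorphic to a fixed (or even arbitrary) pattern is classically $\#P$-complete — for instance one can take $g_j$ to be a clique or a Hamiltonian cycle, both of which are textbook $\#P$-complete counting problems. Hence a polynomial-time algorithm for the $c_{ij}^k$ would, by setting $k=i$, give a polynomial-time algorithm for a known $\#P$-complete problem, which establishes $\#P$-hardness of $c_{ij}^k$.

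Combining the two paragraphs yields $\#P$-completeness. The main obstacle I anticipate is being precise about the input encoding: one needs to fix conventions (are the graphs $g_i,g_j,g_k$ given by adjacency matrices or by indices into $\E(n)$?) so that the reduction $e_{ij}=c_{ij}^i$ is genuinely a polynomial-time reduction, i.e.\ so that producing the index $i$ for a given host graph $g_i$ does not already solve a harder problem. Choosing the adjacency-matrix encoding for both arguments and observing that the identification $k=i$ is literally copying the input sidesteps this issue.
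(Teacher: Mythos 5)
Your hardness argument is exactly the paper's proof: the paper's entire justification is the one-line observation that evaluating $I(g)(h)$ (counting subgraphs isomorphic to $g$ in $h$) is $\#P$-complete, which combined with the preceding proposition $e_{ij}=c_{ij}^i$ gives the reduction you describe. What you add is an explicit membership-in-$\#P$ argument via the Kocay interpretation of $c_{ij}^k$ as the number of pairs $(C,D)$ with $C\cong g_i$, $D\cong g_j$, $C\cup D=g_k$; the paper omits this half entirely. That addition is welcome but has one soft spot: verifying a witness pair $(C,D)$ requires checking $C\cong g_i$ and $D\cong g_j$, not merely $C\cup D=g_k$, so the verifier is not ``trivial'' as you claim. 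The standard fix is to have the NP machine guess explicit isomorphisms $\pi\colon g_i\to C$ and $\sigma\colon g_j\to D$, which overcounts each pair by $|\mathrm{Aut}(g_i)|\,|\mathrm{Aut}(g_j)|$ --- a fixed, polynomial-time-irrelevant multiplicative factor once one is careful about what ``the function $c_{ij}^k$ is in $\#P$'' means. Since the paper itself only proves hardness, your proposal is at least as strong as the original; just tighten that verifier step.
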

\begin{proof}
Evaluating $I(g)(h)$ i.e. counting the number of subgraphs isomorphic to $g$
in $h$ is $\#P$-complete \cite{Papadimitriou}.
\end{proof}

Next we calculate some identities which are needed later. The $G$-poset is
required to contain all multilinear monomials of degree $D+|g_i|$ which can be
formed by using the variables of the $G$-poset. For instance $\E(n)$ is allowed.
\begin{proposition}\label{pro:sigma}
 In the complete $G$-poset $(\E,G)$ containing all monomials of degree $D+|g_i|$ we have
\begin{equation}\label{eq:qwe}
I(g_i)\sum_{g\in \E,|g|=D} I(g)=\sum_{d=\max(D,|g_i|)}^{D+|g_i|} {|g_i| \choose |g_i|+D-d}\sum_{|g_k|=d} e_{ki}I(g_k).
\end{equation}
\end{proposition}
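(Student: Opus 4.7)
The plan is to prove the identity by a direct monomial-counting argument, interpreting both sides as coefficients of multilinear monomials in $\mathbb{C}[x_{ij}]/\langle x_{ij}^2 - x_{ij}\rangle$.

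First I would observe that
\[
\sum_{g\in\E,\,|g|=D} I(g) \;=\; \sum_{\substack{\text{multilinear}\\ m:\,|m|=D}} m,
\]
since the $G$-orbits partition the set of multilinear monomials of degree $D$, and each orbit sum $I(g)$ is exactly the sum of the monomials in its orbit. So the left-hand side becomes
\[
I(g_i)\cdot \!\!\!\sum_{\substack{\text{multilinear}\\ m:\,|m|=D}}\!\!\! m \;=\; \sum_{(t,m)} t\cdot m,
\]
where $t$ ranges over the terms of $I(g_i)$ (i.e.\ the multilinear monomials in the orbit of $g_i$) and $m$ ranges over all multilinear monomials of degree $D$.

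Next I would group the sum on the right by the reduced product $w := \widehat{t\cdot m}$. After the reduction $x_{ij}^2 = x_{ij}$ the product $t\cdot m$ becomes the multilinear monomial whose variables are the union of those in $t$ and $m$; if $r$ denotes the size of their overlap then $|w| = |g_i| + D - r$, with $0 \le r \le \min(|g_i|,D)$, so $d := |w|$ ranges exactly over $\max(D,|g_i|) \le d \le D + |g_i|$. For a fixed multilinear monomial $w$ of degree $d$, I would count the pairs $(t,m)$ with $\widehat{t\cdot m} = w$ in two stages: (i) choose $t$ as a sub-monomial of $w$ isomorphic to $g_i$, contributing the factor $I(g_i)(w)$; (ii) since $m$ must then satisfy $t \cup m = w$ and $|m|=D$, the monomial $m$ is obtained by taking all $d-|g_i|$ variables of $w \setminus t$ together with some $|g_i|+D-d$ of the $|g_i|$ variables of $t$, giving the binomial factor $\binom{|g_i|}{|g_i|+D-d}$.

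Finally I would regroup the monomials $w$ of degree $d$ into $G$-orbits. Since $I(g_i)(w)$ depends only on the isomorphism type $g_k$ of $w$, and since isomorphism-class $g_k$ contributes $I(g_i)(g_k) = e_{ki}$, summing all the monomials in an orbit recombines into the orbit sum $I(g_k)$. This yields
\[
I(g_i)\!\!\sum_{g\in\E,\,|g|=D}\!\! I(g) \;=\; \sum_{d=\max(D,|g_i|)}^{D+|g_i|} \binom{|g_i|}{|g_i|+D-d}\!\!\sum_{|g_k|=d} e_{ki}\, I(g_k),
\]
which is the desired identity. The only subtlety — not really an obstacle — is keeping the range of $d$ consistent with the constraints $r \ge 0$ and $r \le \min(|g_i|,D)$, and ensuring that $\E$ is large enough to contain all isomorphism classes of degree $D + |g_i|$ that can occur, which is precisely the standing hypothesis on $(\E,G)$.
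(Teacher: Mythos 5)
Your proof is correct and takes essentially the same route as the paper's: both expand $I(g_i)$ and the degree-$D$ sum into individual monomials, group the products by their reduction $w=\widehat{t\cdot m}$, and obtain the coefficient as $e_{ki}\binom{|g_i|}{|g_i|+D-d}$, your overlap parameter $r$ being exactly the paper's $\delta=|g_i|+D-d$. Your write-up is a bit more explicit about why the number of admissible $t$ is $I(g_i)(w)$ and about the range of $d$, but the argument is the same.
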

\begin{proof}
Write the product on the left-hand side of (\ref{eq:qwe}) as
\begin{equation}
\sum_{\rho \in S_n/\mathrm{Stab}(g_i)} \rho(A_i) \sum_{|A|=D}A,
\end{equation}
where the latter sum is over all monomials of $a_{ij}$ of degree $D$ and $A_i$ is the monomial of the invariant $I(g_i)$.

On the invariants of degree $D+|g_i|$ there is no overlapping of the variables in $A$ and $\rho(A_i)$. Thus the coefficient of any $I(g_k),|g_k|=D+|g_i|$  is $e_{ki}$. This is because for each monomial in $I(g_k)$ there are $e_{ki}$ choices for $\rho$.

When there are $\delta$ variables which have exponent $2$, there are ${|g_i|
  \choose \delta}$ different subsets of variables in $\rho(A_i)$ which result
in the same monomial in the reduction $a_{ij}^2=a_{ij}$. Thus
the coefficient is ${|g_i| \choose \delta}e_{ki}$. In equation (\ref{eq:qwe})
  we have used the notation $\delta=|g_i|+D-d$.
\end{proof}
There is a simple special case. Let $|h|=\sum_{|g_i|=1}I(g_i)$ denote the sum
of monomials of degree one where $h$ is understood as the graph where the
invariants are evaluated. 
\begin{corollary}\label{pro:eri}
In the complete $G$-poset $(\E,G)$ containing all monomials of degree $|g_i|+1$, we have
\begin{equation}
|h|I(g_i)=|g_i|I(g_i)+\sum_{|g_k|=|g_i|+1} e_{ki}I(g_k),
\end{equation}
where the sum is over basic invariants of degree $|g_i|+1$.
\end{corollary}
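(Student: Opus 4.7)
The plan is to derive this corollary as the $D=1$ specialization of Proposition~\ref{pro:sigma}, which was just established. Substituting $D=1$ into equation~(\ref{eq:qwe}) yields
\begin{equation}
I(g_i)\sum_{g\in \E,|g|=1} I(g)=\sum_{d=\max(1,|g_i|)}^{|g_i|+1} \binom{|g_i|}{|g_i|+1-d}\sum_{|g_k|=d} e_{ki}I(g_k),
\end{equation}
and the left-hand side is exactly $|h|\,I(g_i)$ by the definition $|h|=\sum_{|g|=1} I(g)$ given just before the corollary. So everything reduces to reading off the right-hand side.

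Assuming $|g_i|\ge 1$, the outer index $d$ runs over only two values, $d=|g_i|$ and $d=|g_i|+1$. For the top term $d=|g_i|+1$, the binomial coefficient $\binom{|g_i|}{0}=1$ produces precisely $\sum_{|g_k|=|g_i|+1} e_{ki}I(g_k)$, matching the second summand in the claimed identity. For the term $d=|g_i|$, the binomial coefficient is $\binom{|g_i|}{1}=|g_i|$, and the inner sum $\sum_{|g_k|=|g_i|} e_{ki}I(g_k)$ collapses: if $|g_k|=|g_i|$ and $g_k\not\cong g_i$, then any embedded copy of $g_i$ inside $g_k$ would exhaust all $|g_k|$ edges, forcing $g_i\cong g_k$, a contradiction, so $e_{ki}=I(g_i)(g_k)=0$; and for $g_k\cong g_i$ (i.e.\ $k=i$ in the chosen labeling) we have $e_{ii}=1$. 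Hence this term contributes $|g_i|\,I(g_i)$, giving the first summand on the right-hand side.

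The degenerate case $|g_i|=0$ (the empty graph, $I(g_i)=1$) is easy to dispose of separately: the range of $d$ collapses to the single value $d=1$, the coefficient $\binom{0}{0}=1$, and since every degree-$1$ invariant $I(g_k)$ appears, the right-hand side becomes $\sum_{|g_k|=1} e_{k0}I(g_k)=|h|$, matching the left-hand side (with the $|g_i|I(g_i)=0$ term absent). The only step that requires any care is the collapse argument in the $d=|g_i|$ layer, but it is essentially a one-line observation about counting subgraphs of the same edge-count, so no real obstacle arises.
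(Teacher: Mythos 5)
Your proposal is correct and follows exactly the paper's own route: substitute $D=1$ into Proposition~\ref{pro:sigma}, split the outer sum into the $d=|g_i|$ and $d=|g_i|+1$ layers, and observe that the former collapses to $|g_i|I(g_i)$. The only differences are that you spell out explicitly why $\sum_{|g_k|=|g_i|}e_{ki}I(g_k)=I(g_i)$ and treat the degenerate case $|g_i|=0$, both of which the paper leaves implicit.
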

\begin{proof}
By summing over all monomials of degree one we have by the previous proposition
\begin{eqnarray}
I(g_i)\sum_{g\in \E,|g|=1} I(g)&=&\sum_{d=|g_i|}^{1+|g_i|} {|g_i| \choose |g_i|+1-d}\sum_{|g_k|=d} e_{ki}I(g_k) \\ \nonumber
&=&{|g_i| \choose 1}\sum_{|g_k|=|g_i|} e_{ki}I(g_k)+{|g_i| \choose 0}\sum_{|g_k|=|g_i|+1} e_{ki}I(g_k) \\ \nonumber
&=&|g_i|I(g_i)+\sum_{|g_k|=|g_i|+1} e_{ki}I(g_k).
\end{eqnarray}
\end{proof}

%%%%%%%%%%%%%%%%%%%%%%%%%%%%%%%%%%%%%%%%%%%%%%%%%%%%%%%%%%%%%%%%%%%%5

\section{Minimal Generator/Separator Invariants}\label{sec:se6}
In this section we focus on graph invariants solely. This restriction is
required by the structure of graphs which divide into connected and
unconnected graphs.

A \emph{generator set} $\mathcal{G}=\{g_1,g_2,\ldots,g_r\}$ for a set of graphs $\mathcal{H}$ is a set of graphs such that for each basic graph invariant $I(h), h\in \mathcal{H}$ there is a function $f$ such that $I(h)(x)=f(I(g_1)(x),I(g_2)(x),\ldots,I(g_N)(x))$ for all $x \in \mathcal{H}$. 

A \emph{separator set} $\mathcal{S}=\{g_1,g_2,\ldots,g_r\}$ for a set of graphs $\mathcal{H}$ is a set of graphs such that for each $x \in \mathcal{H}$ the vector $[I(g_1)(x),\ldots,I(g_r)(x)]$ has a distinct value. 

 As we saw in Example 6 in section \ref{sec:se3}, the invariant $I(a_{12})$
 forms the generator/separator set in $\E(3)$. Example 7 shows that in $\E(4)$
 the separator/generator set is $\{$$I(a_{12})$,$I(a_{12}a_{13})$,
 $I(a_{12}a_{23}a_{34})$$\}$. Note that these are all connected graphs. We can
 always choose the generators to be connected as we will see in Theorem \ref{the:con}.

The importance of studying the algebra of graph invariants and its generators towards reconstruction is due to the fact that in this algebra, there is no distinction between the notion of a separator set and of a generator set.
This result is originally due to Mnukhin \cite{Mnukhin} but we prove it
here for completeness.

\begin{theorem}[Mnukhin]\label{the:msepa}
For simple graphs any generator set is also a separator set.
\end{theorem}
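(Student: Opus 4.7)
My plan is to exploit the tautological identity $I(x)(x)=1$ together with the defining property of a generator set; the whole argument reduces to three short lines, and the only delicate point is logical rather than computational.

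First I would fix a generator set $\mathcal{G}=\{g_1,\ldots,g_r\}$ and two graphs $x,y\in\mathcal{H}$ satisfying $I(g_i)(x)=I(g_i)(y)$ for every $i$, aiming to deduce $x\cong y$. By the very definition of a generator set, for each $h\in\mathcal{H}$ there exists a function $f_h$ with $I(h)(z)=f_h\bigl(I(g_1)(z),\ldots,I(g_r)(z)\bigr)$ for all $z\in\mathcal{H}$; evaluating at $z=x$ and $z=y$ and invoking the hypothesis then yields $I(h)(x)=I(h)(y)$ simultaneously for \emph{every} $h\in\mathcal{H}$, so the whole family of basic invariants takes equal values on the two graphs.

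Next I would specialise $h$ to $x$ itself (legitimate because $x\in\mathcal{H}$). Unwinding (\ref{eq:eqeka}) one sees that $I(x)(x)=1$: the sum counts the edge subsets of $x$ whose associated graph is isomorphic to $x$, but any such subset must have $|E(x)|$ edges and hence coincide with $E(x)$. Consequently $I(x)(y)=1$, so $y$ contains a subgraph isomorphic to $x$ and in particular $|E(y)|\ge |E(x)|$; running the same step with $h=y$ gives the reverse inequality, hence $|E(x)|=|E(y)|$, the copy of $x$ inside $y$ exhausts $E(y)$, and we conclude $x\cong y$. The main obstacle --- if there is one at all --- is just noticing that the basic invariants $\{I(h)\}_{h\in\mathcal{H}}$ are themselves already a separator family (which is exactly the content of $I(x)(x)=1$); once this is observed, any family $\mathcal{G}$ whose values determine every $I(h)$ inherits the separating property for free.
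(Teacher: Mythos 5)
Your proof is correct and follows essentially the same route as the paper's: the generator values determine every basic invariant, and the basic invariants themselves separate. You actually make explicit a step the paper leaves tacit --- the paper's argument quietly presupposes that some separator set of basic invariants exists, whereas your observation that $I(x)(x)=1$ exhibits the full family $\{I(h)\}_{h\in\mathcal{H}}$ as one, so your write-up is, if anything, the more complete version of the same proof.
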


\begin{proof}
First we show that a separator set $\{g_1,\ldots,g_r\}$ is also a generator set. 

By definition the vector $[I(g_1),\ldots,I(g_r)]$ gets a distinct value for
all graphs $h$ in the $G$-poset. Thus we can define the function $f$ to map the
vector $[I(g_1)(h)$, $\ldots,I(g_r)(h)]$ to $I(g)(h)$ for every $h\in \E$, where $I(g)$ is an arbitrary graph invariant and we are done.

To show that a generator set $\{g_1,\ldots,g_r\}$ is also a separator set it suffices to show that any separator set of invariants can be written as a function of the generators. Let $f_h$ be a function generating the invariant $I(h)$ and let $h_1,\ldots,h_s$ be any separator set. Now the vector $[f_{h_1},\ldots,f_{h_s}]$ separates all the graphs in the $G$-poset. 

\end{proof}

\begin{theorem}\label{the:con}
Connected graphs in the $G$-poset generate/separate the whole $G$-poset.
\end{theorem}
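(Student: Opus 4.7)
The plan is to invoke Theorem \ref{the:msepa} to reduce the statement to showing that connected graphs form a generator set, i.e.\ that every basic graph invariant $I(g)$ is a polynomial in the invariants $I(c)$ as $c$ ranges over connected graphs in the $G$-poset. Once generation is established, the separation half is automatic from the Mnukhin equivalence.

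I would proceed by induction on the number of connected components $k$ of $g$. The base case $k=1$ is trivial since $g$ itself is connected. For the inductive step with $k\ge 2$, pick any connected component $c$ of $g$ and let $h$ denote the union of the remaining $k-1$ components, so $g = c \sqcup h$ and $cv(g)=cv(c)+cv(h)$. The key move is to compute $I(c)\cdot I(h)$ via Theorem \ref{the:tulo}, working in an $\E(n)$ with $n\ge cv(g)$ so that Lemma \ref{lem:lkjh} ensures a disjoint placement of $c$ and $h$ is available, and then group the product-formula contributions by the isomorphism class of $\widehat{c\rho h}$ to write
\begin{equation}
I(c)\,I(h) = \alpha\, I(g) + \sum_{g'} \beta_{g'}\, I(g'),
\end{equation}
where $\alpha$ collects the coefficients from those double-coset representatives $\rho$ whose placement of $\rho h$ uses a vertex set disjoint from that of $c$, and each remaining $g'$ arises from a placement where at least one vertex is shared.

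The heart of the argument rests on two sub-claims about this identity. First, $\alpha\ge 1$: at least one disjoint placement exists because $cv(c)+cv(h)\le n$, and each disjoint placement contributes a positive integer to $\alpha$ by the orbit-stabilizer identity underlying the product formula, so division by $\alpha$ is legal over $\mathbb{Q}$. Second, every $g'$ arising from a non-disjoint placement has strictly fewer than $k$ connected components: if $v$ is a vertex shared between $c$ and $\rho h$, then $v$ lies in some component of $\rho h$, and because $c$ is connected through $v$, that component is fused with $c$ in $\widehat{c\rho h}$, reducing the component count by at least one.

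Given these two facts, the inductive hypothesis applies both to $I(h)$ (which has $k-1$ components) and to every $I(g')$ appearing in the error sum (each with strictly fewer than $k$ components), so solving the identity,
\begin{equation}
I(g) = \frac{1}{\alpha}\Big(I(c)\,I(h) - \sum_{g'} \beta_{g'}\, I(g')\Big),
\end{equation}
exhibits $I(g)$ as a polynomial in invariants of connected graphs, closing the induction. The main obstacle I expect is the combinatorial bookkeeping for the coefficient $\alpha$: one has to check that every disjoint placement of $c$ and $\rho h$ really does yield $\widehat{c\rho h}\cong g$ and that no non-disjoint placement does, so that the contribution to $\alpha$ is neither undercounted nor contaminated. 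Both checks follow directly from the definition of the reduction $\widehat{\,\cdot\,}$ together with the connectedness of $c$, and do not require any idea beyond the setup above.
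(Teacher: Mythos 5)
Your proof is correct, but it runs in the opposite direction from the paper's. The paper establishes \emph{separation} directly: ordering the connected graphs $G_1,\ldots,G_r$ by degree, the matrix $\bigl[I(G_m)(G_k)\bigr]$ is unitriangular, so a back-substitution ($n_m = I(G_m)(G)-\sum_{k>m} n_k I(G_m)(G_k)$, using that a connected subgraph must sit inside a single component) recovers the multiset of connected components of $G$, hence its isomorphism class; generation then follows from Theorem \ref{the:msepa}. You instead establish \emph{generation} directly, by induction on the number of components via the product formula of Theorem \ref{the:tulo}, using that the disjoint placements contribute a positive integer coefficient $\alpha$ to $I(g)$ (positive integrality holds since $\mathrm{Stab}(c)\cap\rho\,\mathrm{Stab}(h)$ is a subgroup of $\mathrm{Stab}(c\rho h)$) while every overlapping placement fuses $c$ with some component of $\rho h$ and so drops the component count; separation then follows from Theorem \ref{the:msepa}. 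Your route buys an explicit polynomial expression for $I(g)$ in the connected invariants (with rational coefficients, which the paper's definition of generator permits), which is closer in spirit to algebra generation; the paper's route is more elementary and never leaves the $G$-poset. One point you should make explicit: the amalgamated graphs $g'$ (and the connected graphs produced deeper in the recursion) need not belong to the given $G$-poset. For a \emph{complete} $G$-poset this is harmless, since any graph not in a complete $G$-poset has invariant identically zero on the poset (it cannot occur as a subgraph of a member), so those terms drop out when the identity is restricted to the poset; but the argument as written tacitly works in the ambient $\E(n)$ rather than in the poset itself, whereas the paper's algorithm only ever evaluates $I(G_m)$ at members of the poset.
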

\begin{proof}
Let  $\{G_1,G_2,\ldots,G_r\}$ be the connected graphs in the $G$-poset. The
result follows from the following reconstruction algorithm which maps the
input graph $G$ into the representation $\{n_1G_1,n_2G_2,\ldots,n_rG_r\}$, where $n_i$ is the number of isolated components of type $G_i$ in the input graph. By isolated component we understand that the edges of one component are not connected to any other component. Let  $[G_1,G_2,\ldots,G_r]$ be ordered s.t. $deg(G_i) \geq deg(G_{i-1})$.

\vspace{0.5cm}
\begin{tabular}{ccc}
\hline
\textsc{Algorithm} & & \\
 & Input: &  $G$,  $[G_1,G_2,\ldots,G_r]$\\

 & 1 & $m:=r$ \\
 & 2 & Set $n_m=I(G_m)(G)-\sum_{k=m+1}^r n_kI(G_m)(G_k)$.\\
 & 3 & Set $m = m-1$. If $m>0$ goto 2.\\
 & 4 & Print $n_1,n_2,\ldots,n_r$.\\
\hline
\end{tabular}
\vspace{0.5cm}

The only step requiring some explanation is $2$. For each occurrence of the graph $G_i$ in the $G$, the invariant $I(G_i)(G)$ increases by one. However $I(G_i)(G)$ increases also in the connected graphs of higher degree  $G_{i+1},G_{i+2},\ldots,G_{r}$ and these must be subtracted.
\end{proof}

\begin{corollary}
The number of minimal generator/separator invariants is at most the number of
connected graphs in the $G$-poset.
\end{corollary}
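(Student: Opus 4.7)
The plan is to read the corollary as a direct consequence of Theorem \ref{the:con} combined with Theorem \ref{the:msepa}. The argument is essentially a one-liner, so the ``proposal'' is really just to assemble the two ingredients correctly rather than to construct any new machinery.

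First I would recall that, by Theorem \ref{the:con}, the set $\{G_1,G_2,\ldots,G_r\}$ of connected graphs in the $G$-poset constitutes a generator set: the reconstruction algorithm shows that every graph $G$ in the $G$-poset can be recovered, as a disjoint union of its connected components, from the values $I(G_i)(G)$ for $i=1,\ldots,r$. Hence every basic graph invariant $I(h)$ can be written as a function of the $I(G_i)$'s, which is precisely the definition of a generator set.

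Next I would invoke Theorem \ref{the:msepa}, which asserts that for simple graphs any generator set is automatically a separator set. Therefore $\{G_1,\ldots,G_r\}$ is simultaneously a generator set and a separator set for the whole $G$-poset. The minimal generator/separator set, being by definition no larger than any specific generator/separator set, must then have cardinality at most $r$, i.e.\ at most the number of connected graphs in the $G$-poset.

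There is no real obstacle here: the only mildly delicate point is to make clear that ``minimal'' is understood in the cardinality sense, so that exhibiting any one generator/separator set of size $r$ immediately bounds the minimum from above by $r$. Everything else has already been done in Theorems \ref{the:con} and \ref{the:msepa}.
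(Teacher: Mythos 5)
Your proposal is correct and matches the paper's (implicit) argument: the corollary is stated immediately after Theorem \ref{the:con} precisely because exhibiting the connected graphs as a generator/separator set bounds the minimal cardinality from above. The appeal to Theorem \ref{the:msepa} is harmless but not strictly needed, since Theorem \ref{the:con} already asserts that the connected graphs both generate and separate.
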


Let $A$ and $B$ be the monomial representations of the graphs $A$ and $B$. By
the \emph{disjoint union of graphs} $A \dunion B$ we mean the isomorphism
class of graphs $C=A\rho B$ such that for a suitable 0 $\rho$ of the
vertices of $B$, the edges of the graph $A$ are not connected to the edges of
$\rho B$, if such a $\rho$ exists. From now on we use the notation
$n_1G_1\dunion \cdots \dunion n_rG_r$ to denote the graph formed by the disjoint union of $n_i$ $G_i$ graphs for each $G_i$, $i=1\ldots r$.

We saw above that in small $G$-posets like $\E(3)$ and $\E(4)$ even some
 connected graphs can be generated by a smaller number of connected
 graphs. However this result does not hold for arbitrarily large $G$-posets. It
 is possible to define infinitely large $G$-posets which only contain connected
 components of some finite set $\{G_1,G_2,\ldots,G_r\}$ but multiple times. 
 We use the notation $\langle G_1,\ldots,G_r \rangle$ to denote the complete
 $G$-poset which contains all graphs of the form $n_1G_1 \coprod \cdots \coprod
 n_rG_r$, $n_i \in \mathbb{Z}^+$.

The following theorem explains what happens in this case when the $G$-poset becomes large i.e. the number of edges grows without bound.

\begin{theorem}
Let $G_1,\ldots,G_r$ be the connected graphs of degree $\leq d$, where $d \geq
1$. Then there are graphs $T$ and $U$ of degree at most $(d+1)(2^d-1)$ which cannot be separated/generated
by $I(G_1),\ldots,I(G_r)$. 
\end{theorem}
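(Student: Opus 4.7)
The plan is to construct two explicit non-isomorphic graphs $T,U$ with $I(G_i)(T)=I(G_i)(U)$ for every $i=1,\dots,r$. The structural fact I would exploit is additivity: since each $G_i$ is connected, a subgraph of $X\dunion Y$ isomorphic to $G_i$ must lie entirely in $X$ or in $Y$, so $I(G_i)(X\dunion Y)=I(G_i)(X)+I(G_i)(Y)$. Thus each $I(G_i)$ is a linear function on the free commutative monoid of graphs generated by connected graphs, and it suffices to find an integer relation between the invariant vector of one connected graph outside $\{G_1,\dots,G_r\}$ and the invariant vectors of the $G_i$.

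I would use the single ``extra'' connected graph $C := K_{1,d+1}$, the star with $d+1$ edges. Its invariant profile is very sparse, because every connected subgraph of a star is itself a star: $I(G)(K_{1,d+1})=0$ for any non-star $G$, while $I(K_{1,l})(K_{1,k})=\binom{k}{l}$. Thus the desired relation
\begin{equation*}
\sum_{k=1}^{d} a_k\, I(G_j)(K_{1,k}) \;=\; I(G_j)(K_{1,d+1})
\end{equation*}
is automatic for non-star $G_j$, and for $G_j=K_{1,l}$ reduces to
\begin{equation*}
\sum_{k=1}^d a_k\binom{k}{l}=\binom{d+1}{l},\qquad l=1,\dots,d.
\end{equation*}
The coefficient matrix is unipotent upper triangular, so it has a unique integer solution, which a Pascal-matrix inversion (or a direct telescoping of alternating binomial sums) gives as $a_k=(-1)^{d-k}\binom{d+1}{k}$.

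Splitting the $a_k$ by sign, I set
\begin{equation*}
T := \coprod_{\substack{1\le k\le d\\ d-k\text{ even}}}\binom{d+1}{k}\, K_{1,k},\qquad U := K_{1,d+1} \;\dunion\; \coprod_{\substack{1\le k\le d\\ d-k\text{ odd}}}\binom{d+1}{k}\, K_{1,k}.
\end{equation*}
Additivity and the identity above give $I(G_j)(T)=I(G_j)(U)$ for every connected $G_j$ of degree $\le d$, so the proposed generator/separator set fails on the pair $(T,U)$. Non-isomorphism is immediate: $U$ contains a component of degree $d+1$, whereas every component of $T$ has degree $\le d$, and the decomposition of a graph into connected components is unique.

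For the degree bound, the $l=1$ instance of the identity reads $\sum_k a_k\, k=d+1$, which forces $|T|=|U|$. Using the standard identity $\sum_{k=1}^d k\binom{d+1}{k}=(d+1)(2^d-1)$, I obtain $|T|+|U|=(d+1)+(d+1)(2^d-1)=(d+1)2^d$ and hence $|T|=|U|=(d+1)\,2^{d-1} \le (d+1)(2^d-1)$, comfortably within the stated bound. The only nontrivial step is the Pascal inversion yielding $a_k=(-1)^{d-k}\binom{d+1}{k}$; the rest is bookkeeping, and the construction even produces a strictly better bound than the one stated.
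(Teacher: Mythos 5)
Your proof is correct, but it takes a genuinely different and more explicit route than the paper. Both arguments share the same skeleton: introduce one connected graph of degree $d+1$, use additivity of connected-graph invariants over disjoint unions ($I(A)(B\dunion C)=I(A)(B)+I(A)(C)$ for connected $A$), find an integer relation expressing its invariant vector in terms of the degree-$\le d$ graphs, and split the coefficients by sign to produce $T$ and $U$. The paper, however, works with an \emph{arbitrary} connected $G_{r+1}$ of degree $d+1$ and obtains the coefficients abstractly as $c=[I(g_k)(G_{r+1})]E^{-1}$ over the whole degree-$\le d$ $G$-poset, then bounds $\sum_k|c_k||g_k|$ through a recursion $S(d+1)=-S(d)+(-1)^{d+1}2^d$ for an alternating binomial sum; that estimate involves some delicate manipulation of absolute values inside sums. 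You instead specialize to $G_{r+1}=K_{1,d+1}$, for which all non-star connected invariants vanish, so the relevant minor of the $M$-transform collapses to the $d\times d$ Pascal matrix $\bigl(\binom{k}{l}\bigr)$ with the explicit unipotent inverse giving $a_k=(-1)^{d-k}\binom{d+1}{k}$ (this checks out via $\binom{d+1}{k}\binom{k}{l}=\binom{d+1}{l}\binom{d+1-l}{k-l}$ and the vanishing of the full alternating sum). This yields completely explicit witnesses $T$ and $U$ built from stars, an elementary non-isomorphism argument (only $U$ has a component with $d+1$ edges), and the sharper bound $|T|=|U|=(d+1)2^{d-1}\le(d+1)(2^d-1)$. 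What the paper's generality buys is the remark following its proof: if some connected invariant of degree $\le d$ is \emph{missing} from the generating set, the same machinery applies with $G_{r+1}$ of degree $\le d$; your star construction does not cover that variant. What your version buys is verifiability by hand, avoidance of the paper's questionable absolute-value estimates, and a strictly better degree bound for $d\ge 2$.
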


Since all unconnected invariants of degree $\leq d$ can be determined when the
connected invariants are known, the $T$ and $U$ are consequently inseparable by all graph invariants of
degree $d$ and less.

\begin{proof}
First select a connected graph $G_{r+1}$ of degree $d+1$ not appearing in the set
$\{G_1,\ldots,G_r\}$. We may safely assume that the degrees of
$\{G_1,\ldots,G_r\}$ are greater or equal to one since the constant invariant
does not help in separation.

Let $\EErr=\langle n_1G_1\dunion \cdots \dunion n_{r}G_{r} \rangle$ and
$\EEr=\langle n_1G_1\dunion \cdots \dunion n_{r+1}G_{r+1} \rangle$ be
$G$-posets generated by the connected graphs $G_1,\ldots,G_{r-1}$ and
$G_1,\ldots,G_{r}$ correspondingly. Let $\{g_1,g_2,\ldots,g_R\}$ denote the members of $\EErr$ of degree $\leq
d$. Below we will show that $I(G_{r+1})$ is independent of $\{I(g_1),I(g_2),\ldots,I(g_{R})\}$ when the $G$-poset is sufficiently large. 

The idea of the proof is to generate large graphs $T$ and $U$ s.t. they cannot
be separated by the $r$ connected graph invariants $I(G_1),\ldots,I(G_{r})$. This
implies that there is no function $f$
s.t. $I(G_{r+1})=f(I(G_1),\ldots,I(G_{r}))$. If $I(G_1),\ldots,I(G_r)$ are all the
connected invariants of degree $d$ and less, $I(G_{r+1})$ can neither be written as a function of $I(g_1),I(g_2),\ldots$ since these are generated by $\{I(G_1),I(G_2),\ldots,I(G_{r})\}$.

Let 
\begin{equation}
c=[I(g_1)(G_{r+1}),I(g_2)(G_{r+1}),\dots,I(g_R)(G_{r+1})]E^{-1},
\end{equation}
where $E$ is the $M$-transform of the $G$-poset $\EErr$ up to degree $d$. Divide $c$ into positive and negative parts s.t. $c=c^{+}-c^{-}$ and $\forall i:$ $c^{+}_i\geq 0$, $c^{-}_i\geq 0$. The coefficients $c$ are selected so that
\begin{equation}
\forall i=1..r:\sum_{k=1}^{R}c_kI(G_i)(g_k)=I(G_i)(G_{r+1})
\end{equation}
For a connected graph $A$ we have
\begin{equation}
I(A)(B \dunion C)=I(A)(B)+I(A)(C),
\end{equation}
where $\dunion$ denotes the disjoint union of two graphs i.e. the edges of $B$ and $C$ are not connected in $B \dunion C$.

Since $G_1,\ldots,G_r$ are connected, we have
\begin{eqnarray}
\forall i=1..r:\sum_{k=1}^{R}c_k^+I(G_i)(g_k) \\ \nonumber
=I(G_i)(\coprod_{k=1}^R c_k^+ g_k) \\ \nonumber
=I(G_i)(G_{r+1} \dunion \coprod_{k=1}^R c_k^- g_k),
\end{eqnarray}
where the coefficients $c$ in the unions denote the multiplicity of the corresponding graph. Thus we have found graphs
\begin{eqnarray}
T=\coprod_{k=1}^R c_k^+ g_k, \\ \nonumber
U=G_{r+1} \dunion \coprod_{k=1}^R c_k^- g_k
\end{eqnarray}
such that they cannot be distinguished with invariants $I(G_1),I(G_2),\ldots,I(G_{r})$. 

It remains to calculate an upper bound for $d=max(deg(T),deg(U))$. Clearly
$deg(T) \leq \sum_{k=1}^R |c_k||g_k|$. We expand this

\begin{eqnarray}
deg(T) \leq \sum_{k=1}^R |\sum_{h=1}^R (-1)^{|g_h|-|g_k|}I(g_k)(g_h)I(g_h)(G_{r+1})| |g_k| \\ \nonumber
=|\sum_{h=1}^R (-1)^{|g_h|}\sum_{k=1}^RI(g_k)(g_h)I(g_h)(G_{r+1})| |g_k| \\ \nonumber
=|\sum_{h=1}^R (-1)^{|g_h|} \sum_{\Delta_1=1}^{|g_h|} {|g_h| \choose \Delta_1}\Delta_1 I(g_h)(G_{r+1})| \\ \nonumber
= |\sum_{\Delta_2=1}^{d} (-1)^{\Delta_2} \sum_{\Delta_1=1}^{\Delta_2} {\Delta_2 \choose \Delta_1} \Delta_1 {|G_{r+1}| \choose \Delta_2}|.
\end{eqnarray}

The last sum equals
\begin{eqnarray}
 {|G_{r+1}| \choose \Delta_2}\sum_{\Delta_1=1}^{\Delta_2} \Delta_1{\Delta_2 \choose \Delta_1}\\ \nonumber
= {|G_{r+1}| \choose \Delta_2} \sum_{\Delta_1=1}^{\Delta_2} \Delta_2 \frac{(\Delta_2-1)!}{(\Delta_2-\Delta_1)!(\Delta_1-1)!} \\ \nonumber
= {|G_{r+1}| \choose \Delta_2}\Delta_2  2^{\Delta_2-1}
\end{eqnarray}
and thus the whole sum is
\begin{eqnarray}
|\sum_{\Delta_2=1}^{d} (-1)^{\Delta_2} \Delta_2 {|G_{r+1}| \choose \Delta_2} 2^{\Delta_2-1}| \\ \nonumber
=|G_{r+1}| |\sum_{\Delta=1}^{d} (-1)^\Delta \frac{(|G_{r+1}|-1)!}{(|G_{r+1}|-\Delta)!(\Delta-1)!}2^{\Delta-1}| \\ \nonumber
=|G_{r+1}| |\sum_{\Delta=1}^{d} (-1)^\Delta {|G_{r+1}|-1 \choose \Delta -1} 2^{\Delta-1}| \\ \nonumber
=|G_{r+1}| |\sum_{\Delta=1}^{N} (-1)^\Delta {d \choose \Delta-1} 2^{\Delta-1}|.
\end{eqnarray}
Let $S(d):=\sum_{\Delta=1}^{N} (-1)^\Delta {d \choose \Delta-1}
2^{\Delta-1}$. Obviously $S(1)=-1$. Also
\begin{eqnarray}
S(d+1)&=&\sum_{\Delta=1}^{d+1} (-1)^\Delta {d+1 \choose \Delta -1 }2^{\Delta
  -1} \\ \nonumber
&=&\sum_{\Delta=1}^{d+1} (-1)^\Delta \left( {d \choose \Delta -1 } +{d \choose
  \Delta-2} \right) 2^{\Delta - 1} \\ \nonumber
&=&\left( \sum_{\Delta=1}^d (-1)^\Delta {d \choose \Delta-1} 2^{\Delta -1}
\right) + (-1)^{d+1}2^d \\ \nonumber 
&-&2 \left( \sum_{\Delta=1} (-1)^{\Delta+1} { d \choose \Delta -2} 2^{\Delta
    -2} \right) \\ \nonumber
&=& S(d) + (-1)^{d+1}2^d -2 S(d) \\ \nonumber
&=& -S(d)+(-1)^{d+1}2^d.
\end{eqnarray}
The solution to the recursion $S(d+1)=-S(d)+(-1)^{d+1}2^d$ with initial
constraint $S(1)=-1$ is $S(d)=(-1)^d(2^d-1)$. 

Thus we have $|T| \leq |G_{r+1}|(2^d-1)$. Since the invariant computing the
degree of graphs is generated by all invariants of degree $1$, the degree of
$U$ must be equal to the degree of $T$.
\end{proof}

We have formulated the proof so that it is easy to consider the case where
$G_{r+1}$ could be chosen to be of degree $\leq d$. In other words, if the
$G$-poset misses some connected invariant of degree $\leq d$, then the result
applies with the degree bound $|G_{r+1}|2^d$, where the additional term
$|G_{r+1}|$ is included since we are not sure anymore if the $G$-poset $\EErr$
can generate the invariant computing the degree of $T$. Thus the degree of $U$
is the upper bound. 

The $G$-poset $\EErr$, however, must be complete to prove this upper bound. If it is not complete, $T$
and $U$ are still inseparable but their degree is possibly harder to estimate
since we don't fully understand the corresponding $E^{-1}$. It is still unimodular,
however, and the degree is finite.

Consider now the infinite $G$-poset of all simple graphs $\E(\infty)$. By the
reasoning above we get the next corollary.
\begin{corollary}
In $\E(\infty)$ the minimal generator/separator set is the set of all connected
invariants.
\end{corollary}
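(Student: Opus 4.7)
The plan is to combine Theorems \ref{the:con} and \ref{the:msepa}, which together show that the set $\mathcal{C}=\{I(H):H\text{ connected}\}$ is a generator and separator set for $\E(\infty)$, with a minimality argument drawn from the preceding theorem. What remains is to verify minimality: for each connected graph $G^*$, it suffices to exhibit two distinct graphs in $\E(\infty)$ on which every connected invariant other than $I(G^*)$ agrees.

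Fix a connected graph $G^*$ of degree $k\geq 1$ and apply the preceding theorem with $d=k-1$ and $G_{r+1}=G^*$, so that $\{G_1,\ldots,G_r\}$ is the complete list of connected graphs of degree $\leq d$. The theorem produces graphs $T$ and $U$ in $\E(\infty)$ satisfying $I(G_i)(T)=I(G_i)(U)$ for $i=1,\ldots,r$. Inspection of the construction in its proof shows that every connected component of $T$ lies in $\{G_1,\ldots,G_r\}$, while $U=G^*\dunion T'$ with $T'$ again a disjoint union of such components. Hence $T\not\cong U$, since $U$ has a component of degree $k$ and $T$ does not.

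The main obstacle is to argue that connected invariants of degree $>d$ other than $I(G^*)$ also fail to separate $T$ from $U$. Since a connected subgraph of a disjoint union must be contained in a single component, $I(H)$ applied to either graph equals $\sum_C I(H)(C)$ over the connected components $C$. For $T$ this vanishes because every component has degree $\leq d<\deg H$. For $U$ only the $G^*$-component can contribute, so $I(H)(U)=I(H)(G^*)$; this is $0$ when $\deg H>k$ by degree comparison, and when $\deg H=k$ the only $k$-edge subgraph of the $k$-edge graph $G^*$ is $G^*$ itself, so $I(H)(G^*)=0$ whenever $H\not\cong G^*$. Finally $I(G^*)(T)=0$ (no component of $T$ is large enough to contain $G^*$) while $I(G^*)(U)\geq 1$, so $I(G^*)$ is indispensable; as $G^*$ was arbitrary, the set of all connected invariants is minimal. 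The degenerate case $k=1$ is handled directly by taking $T$ empty and $U$ a single edge, which is consistent with the same analysis.
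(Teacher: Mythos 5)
Your argument is correct and follows the paper's intended route: the paper justifies this corollary only with the phrase ``by the reasoning above,'' i.e., by combining Theorem \ref{the:con} (connected invariants generate/separate) with the inseparable pair $(T,U)$ constructed in the preceding theorem. You supply one step the paper leaves implicit and which is genuinely needed for minimality: that theorem only shows the connected invariants of degree $\leq d$ fail collectively, whereas indispensability of a single $I(G^*)$ requires checking that all connected invariants of degree $\geq k=\deg G^*$ other than $I(G^*)$ also agree on $T$ and $U$; your component-wise additivity argument (every component of $T$, and every component of $U$ except the distinguished copy of $G^*$, has degree $<k$, and the only $k$-edge subgraph of $G^*$ is $G^*$ itself) does this correctly, and the degenerate case $k=1$ is handled properly.
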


The next corollary explains the result in terms of weight enumeration functions. Let $f(x)$ be the number of connected graphs of degree $x$ in the $G$-poset of interest. Define then $F(x)=\sum_{y=1}^x f(y)$. For example if we consider the $G$-poset of all graphs of degree $1,2,\ldots$ we have the following corollary.
\vspace{0.5cm}
\begin{corollary}
 Let $\E$ be a $G$-poset with all graphs of degree $(d+1)(2^d-1)$ and less. Then the size of the minimal generator/separator set is at least $F(d)+1$.
\end{corollary}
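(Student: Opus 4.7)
The plan is to invoke the preceding theorem with $r := F(d)$ and with $G_{1},\ldots,G_{F(d)}$ enumerating the connected graphs of degree at most $d$ in $\E$; I will take $G_{r+1}$ to be any connected graph of degree exactly $d+1$ (which exists, e.g.\ a path of length $d+1$). Because $\E$ contains every graph of degree up to $(d+1)(2^{d}-1)$, the pair $T,U$ produced by the theorem lies inside $\E$, and the remark immediately following that proof extends the equalities $I(G_{i})(T)=I(G_{i})(U)$ for $i\le F(d)$ to $I(h)(T)=I(h)(U)$ for every invariant $I(h)$ of degree at most $d$ (connected or not).

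Next I will compute, for any connected $H$ with $|H|>d$, that
\[
I(H)(T)-I(H)(U) = -I(H)(G_{r+1}),
\]
by using the additivity of connected invariants over disjoint unions together with the fact that every connected component of $T$ and of $U$ other than $G_{r+1}$ has degree at most $d<|H|$, hence contributes zero. This expression is nonzero precisely when $H\subseteq G_{r+1}$, and at degree $d+1$ this forces $H\cong G_{r+1}$. Therefore $I(G_{r+1})$ is the unique connected invariant of degree greater than $d$ that separates $T$ from $U$, so any connected generator/separator set must contain it.

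By Theorem \ref{the:msepa} together with Theorem \ref{the:con} I may restrict to connected generator sets. The final step is to argue that all $F(d)$ low-degree connected invariants $I(G_{1}),\ldots,I(G_{F(d)})$ must also appear; this I handle by iterating the preceding theorem at each smaller degree $d'<d$, producing for each such $G_{i}$ a further pair in $\E$ (whose degree is still bounded by $(d+1)(2^{d}-1)$) whose unique low-degree connected separator is $I(G_{i})$. Together with the obligatory $G_{r+1}$, this yields the claimed lower bound of $F(d)+1$. The main obstacle is precisely this inductive step: the preceding theorem as stated directly pins down only a single required connected invariant of degree $d+1$, so a careful reapplication at each smaller degree --- together with varying the choice of ``$G_{r+1}$'' within those auxiliary constructions --- is needed to show that the $F(d)$ low-degree connected invariants are each individually indispensable rather than obtainable from the others together with $I(G_{r+1})$.
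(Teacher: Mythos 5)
The paper offers no proof of this corollary at all---it is presented as an immediate rephrasing of the preceding theorem---so you are right to sense that something must be supplied, and your computation that for connected $H$ with $|H|>d$ one has $I(H)(T)-I(H)(U)=-I(H)(G_{r+1})$ (because $I(H)(g_k)=0$ whenever $|H|>|g_k|$) is correct and is a genuinely useful sharpening. Your self-identified ``main obstacle'' is also the lesser of your problems: iterating the theorem at each degree $\delta\le d+1$ with every connected $G'$ of degree $\delta$ in the role of $G_{r+1}$ does produce, for each such $G'$, a pair in $\E$ whose \emph{only connected} separator is $I(G')$, so that part of your plan goes through.

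The genuine gap is the sentence ``By Theorem \ref{the:msepa} together with Theorem \ref{the:con} I may restrict to connected generator sets.'' Those theorems say that the set of all connected invariants \emph{is} a generator/separator set; they do not say that a minimal separator set consists of connected graphs, nor that an arbitrary separator set of size $s$ can be replaced by a connected one of size $\le s$. This matters concretely: disconnected invariants of degree greater than $d$ whose components each have degree at most $d$ are covered neither by the remark (which only handles invariants of total degree $\le d$) nor by your connected-case computation, and they \emph{can} separate the constructed pair. For $d=1$ the theorem's construction yields $T=K_2\dunion K_2$ and $U=G_{r+1}=a_{12}a_{13}$, and the single disconnected invariant $I(K_2\dunion K_2)$ already distinguishes them ($1$ versus $0$). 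So the pair $(T,U)$ does not force an arbitrary separator set to contain $I(G_{r+1})$, and your counting argument only bounds the size of separator sets drawn from the connected invariants. Under that restricted reading of ``minimal'' your argument (with the degree-by-degree iteration) is fine and in fact gives $F(d+1)$ rather than $F(d)+1$; for the corollary as literally stated, an additional argument ruling out small separator sets containing disconnected or high-degree invariants is still missing---and it is missing from the paper as well.
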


\begin{corollary}
The set of minimal separators/generators increases without limit as the number of vertices $n$ approaches infinity.
\end{corollary}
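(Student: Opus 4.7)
The plan is to deduce the corollary directly from the preceding corollary (the $F(d)+1$ lower bound) by showing that, for every target size $M$, we can choose $n$ large enough so that $\E(n)$ falls under the hypothesis of that corollary for some $d$ with $F(d)+1 \geq M$. Concretely, given $M$, I first pick $d$ so that the number of connected graphs of degree at most $d$ exceeds $M-1$; this is possible because there are infinitely many pairwise non-isomorphic connected graphs (for instance the paths $P_2, P_3, P_4, \ldots$ alone force $F(d) \to \infty$).

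Next I choose $n$ large enough that every graph of degree at most $(d+1)(2^d-1)$ fits inside $\E(n)$. Since any graph of $D$ edges has at most $2D$ vertices among those in connection with its edges, taking $n \geq 2(d+1)(2^d-1)$ suffices, so that $\E(n)$ contains every graph of degree $\leq (d+1)(2^d-1)$ as a sub-$G$-poset. Then the preceding corollary, applied to the sub-$G$-poset of $\E(n)$ consisting of graphs of degree at most $(d+1)(2^d-1)$, gives that the minimal generator/separator set restricted to this sub-$G$-poset has at least $F(d)+1$ elements. Since generators/separators for $\E(n)$ must in particular generate/separate this sub-$G$-poset, the minimal generator/separator set for $\E(n)$ has size at least $F(d)+1 \geq M$.

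The main obstacle, if any, is a bookkeeping one: the previous corollary is stated for a $G$-poset whose members are graphs of bounded degree, whereas $\E(n)$ contains graphs of higher degree as well. One must verify that the lower bound on generators/separators carries over from the bounded-degree sub-$G$-poset to the full $\E(n)$. This is straightforward because any separator set for $\E(n)$ is, by restriction, a separator set for the smaller $G$-poset, and by Theorem \ref{the:msepa} the same equivalence of separators and generators holds there. Letting $M \to \infty$ and observing that the corresponding $n(M)$ also tends to infinity then yields the desired unbounded growth.
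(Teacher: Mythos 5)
Your argument is correct and is essentially the route the paper intends: the corollary is stated without proof as an immediate consequence of the preceding $F(d)+1$ lower bound, and your write-up simply makes explicit the chain (choose $d$ with $F(d)+1\geq M$, choose $n$ large enough that $\E(n)$ contains all graphs of degree $\leq (d+1)(2^d-1)$, and observe that separators for $\E(n)$ restrict to separators for the bounded-degree sub-$G$-poset). The bookkeeping step you flag is handled correctly, since the paper's definition of a separator set for a set of graphs $\mathcal{H}$ does not require the separating invariants to belong to $\mathcal{H}$.
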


In the following we describe an upper bound for the generator/separator
invariants in $\E(n)$.

\begin{theorem}
The size of the minimal generator/separator set of $\E(n)$ is at most $h(\lfloor {n
  \choose 2}/2 \rfloor)$, where $h(d)$ is the number of graphs in $\E(n)$ with $d$ edges.
\end{theorem}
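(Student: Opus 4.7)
The plan is to construct a separator set of cardinality at most $h(d_0)$, where $d_0 = \lfloor \binom{n}{2}/2 \rfloor$; by Theorem~\ref{the:msepa} this simultaneously gives a generator set of the same size. The guiding principle is that the sequence $h$ is unimodal with peak at $d_0$, combining the complementation symmetry $h(d) = h(\binom{n}{2}-d)$ with the standard monotonicity on each side of the subgraph lattice, so that $h(\Delta) \le h(d_0)$ for every degree $\Delta$.

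The first candidate is $S = \{I(g) : |g| = d_0\}$, of cardinality $h(d_0)$. By Proposition~\ref{prangi}, the minor $E_{d_0}^{\Delta}(n)$ has rank $\min(h(\Delta), h(d_0)) = h(\Delta)$ for every $\Delta \ge d_0$, so its rows (indexed by graphs of degree $\Delta$) are linearly independent and hence pairwise distinct. The edge-count $|H|$ is itself recoverable from $S$ via $\sum_{|g|=d_0} I(g)(H) = \binom{|H|}{d_0}$, so graphs of different high degrees are also distinguished; thus $S$ separates all graphs of degree at least $d_0$.

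Graphs of degree $\Delta < d_0$ are not separated by $S$, since every invariant in $S$ vanishes identically on them, so the candidate must be modified. I would swap out some degree-$d_0$ invariants of $S$ for carefully chosen lower-degree ones, keeping the cardinality at $h(d_0)$, using two tools: the inequality $h(\Delta) \le h(d_0)$ to allocate sufficient budget at every level, and the complementation identity of Lemma~\ref{lem:anti} — which expresses $I(g)(K_n \setminus H)$ as a triangular invertible linear combination of the $I(a)(H)$ with $a \subseteq g$ — to transfer separation information between $H$ and its complement without damaging the high-degree separation already secured.

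The main obstacle is carrying out this substitution explicitly. A natural route is a greedy induction descending from $\Delta = d_0$ down to $\Delta = 0$, at each step introducing only as many additional degree-$\Delta$ invariants as are required to boost the rank of the current minor to $h(\Delta)$ and simultaneously removing an equal number of now-redundant degree-$d_0$ invariants from $S$; Proposition~\ref{prangi} certifies that such choices exist at every level, and the unimodality of $h$ is precisely what makes a telescoping count keep the total cardinality bounded by $h(d_0)$. Verifying that these choices can be made compatibly across all levels simultaneously — so that the resulting mixed-degree family separates $\E(n)$ globally — is the technical heart of the proof.
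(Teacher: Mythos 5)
Your reduction to a separator set via Theorem~\ref{the:msepa} and your analysis of the middle layer are sound as far as they go: the $h(d_0)$ invariants of degree $d_0=\lfloor\binom{n}{2}/2\rfloor$ do separate all graphs of degree at least $d_0$, by the full rank of the minors $E_{d_0}^{\Delta}(n)$ from Proposition~\ref{prangi} together with the recovery of $|H|$ from $\sum_{|g|=d_0}I(g)(H)=\binom{|H|}{d_0}$. But the step you yourself flag as the ``technical heart'' is precisely the part that is missing, and the swap-based descent you sketch does not obviously close it: removing a degree-$d_0$ invariant can destroy the high-degree separation you have already secured; nothing in Proposition~\ref{prangi} guarantees that the degree-$\Delta$ invariants you add render an equal number of degree-$d_0$ invariants redundant; and Lemma~\ref{lem:anti} only transfers information between $H$ and $K_n\setminus H$ once all the relevant subinvariants are already under control, which is what you are trying to establish. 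As written, the proposal therefore has a genuine gap.

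The missing ingredient is the product formula. The paper does not start from the middle layer at all: it ascends from the bottom using Corollary~\ref{pro:eri}, i.e. $(|h|-|g_i|)I(g_i)=\sum_{|g_k|=|g_i|+1}e_{ki}I(g_k)$. Read as a linear system with the degree-$(d+1)$ invariants as unknowns and the degree-$d$ invariants as data, its coefficient matrix is exactly the minor $E_{d}^{d+1}(n)$ (up to transpose), which has full rank by Proposition~\ref{prangi}. Hence at each level one need adjoin only $h(d+1)-h(d)$ new generators, and none at all once $h(d+1)\le h(d)$, i.e. past degree $d_0$; the count telescopes to $h(d_0)$. Note that you and the paper invoke the same minors but use them differently: you use their rows to separate graphs of a fixed degree, while the paper uses them to generate the next layer of invariants from the previous one. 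It is the latter mechanism that handles the low-degree graphs on which your candidate set $S$ vanishes identically, and without it (or a worked-out substitute) the bound is not established.
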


\begin{proof}
Use Corollary \ref{pro:eri}
\begin{equation}\label{eq:lhd}
(|h|-|g_i|)I(g_i)=\sum_{|g_k|=|g_i|+1} e_{ki}I(g_k)
\end{equation}
to solve the invariants of degree $d+1$ by all invariants of degree $d$. By Proposition
\ref{prangi} at least the same amount of invariants can be solved as there are
invariants of degree $d$. After degree $\lfloor {n \choose 2}/2 \rfloor$,
the system is fully or overdetermined.
\end{proof}

\begin{proposition}\label{Ulam}
The Ulam's reconstruction conjecture is true for graphs with $n+1$ vertices
and $d$ edges if
\begin{equation}
h_{n+1}(d)-h_{n+1}(d-1) \leq h_n(d) \ \forall \ d \leq \lfloor {n+1 \choose
  2}/2 \rfloor,
\end{equation}
where $h_n(d)$ is the number of unlabeled graphs with $d$ edges and $n$ vertices.
\end{proposition}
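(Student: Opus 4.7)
The plan is to show that the deck of any graph $G$ on $n+1$ vertices with $d$ edges determines $I(F)(G)$ for every $F\in\E(n+1)$. Since two graphs on $n+1$ vertices are isomorphic iff all their basic invariants agree, this suffices for Ulam reconstruction of $G$. By Kelly's lemma, $I(F)(G)$ is reconstructible from the deck whenever $cv(F)\le n$, as such $F$ has strictly fewer non-isolated vertices than $G$; so the task reduces to recovering, at each degree $d'\le d$, the $h_{n+1}(d')-h_n(d')$ values $I(F)(G)$ for which $cv(F)=n+1$. Passing to the complement $\overline{G}$ if necessary (noting $|G|+|\overline G|={n+1\choose 2}$ and that the deck of $\overline G$ is determined by that of $G$), I may assume $d\le\lfloor{n+1\choose 2}/2\rfloor$, so that the induction range below lies in the regime where $h_{n+1}(d')$ is nondecreasing in $d'$.

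Next I would induct on $d'=0,1,\dots,d$. The base case is immediate since $I(\emptyset)(G)=1$. For the inductive step, Corollary~\ref{pro:eri} applied at $h=G$ to each $g_i$ of degree $d'-1$ yields
\begin{equation*}
(d-d'+1)\,I(g_i)(G)=\sum_{|g_k|=d'}e_{ki}\,I(g_k)(G),
\end{equation*}
where the left-hand scalar is nonzero because $d'\le d$. The left-hand side is known by the inductive hypothesis, and on the right-hand side all terms with $cv(g_k)\le n$ are known via Kelly; rearranging produces a linear constraint on the unknowns $\{I(g_k)(G):|g_k|=d',\ cv(g_k)=n+1\}$. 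Letting $g_i$ range over the $h_{n+1}(d'-1)$ graphs of degree $d'-1$ gives an overall system of $h_{n+1}(d'-1)$ equations in $h_{n+1}(d')-h_n(d')$ unknowns. The hypothesis $h_{n+1}(d')-h_{n+1}(d'-1)\le h_n(d')$ is exactly the condition ensuring at least as many equations as unknowns, while Proposition~\ref{prangi} provides that $E^{d'}_{d'-1}$ has maximal rank $h_{n+1}(d'-1)$ on this range.

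The main obstacle is verifying that the submatrix of $E^{d'}_{d'-1}$ obtained by deleting the $h_n(d')$ columns indexed by graphs with at least one isolated vertex retains column rank equal to its number of columns, namely $h_{n+1}(d')-h_n(d')$. Proposition~\ref{prangi} guarantees maximal rank only for the full matrix; the restriction to the "unknown" columns is not automatic. I would attack this by reusing the contraction construction in the proof of Proposition~\ref{prangi}: the "known" graphs $\E(n)\subset\E(n+1)$ form a sub-$G$-poset, which should translate into a block structure of $E$ and allow one to deduce that deleting the corresponding columns does not drop the rank below the hypothesised bound (alternatively, a suitable Mnukhin-inverse identity \eqref{inverse} combined with Lemma~\ref{lem:anti} may dualise the problem to a known rank statement). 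Granting this rank statement, the induction recovers $I(F)(G)$ for all $F\in\E(n+1)$ with $|F|\le d$, and the isomorphism class of $G$ is thereby reconstructed from its deck, establishing Ulam's conjecture under the stated numerical hypothesis.
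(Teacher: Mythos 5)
Your proposal follows essentially the same route as the paper's proof: induct on the number of edges $d'$, use Corollary \ref{pro:eri} (the system (\ref{eq:lhd})) to relate the degree-$(d'-1)$ invariants to the degree-$d'$ ones, treat the invariants of graphs with $cv\le n$ as known from the deck (Kelly's lemma, which the paper leaves implicit by saying "once we know the invariants of degree $d$ in $\E(n)$"), and observe that the numerical hypothesis is exactly the condition that the resulting linear system in the $h_{n+1}(d')-h_n(d')$ unknowns with $cv=n+1$ has at least as many equations as unknowns. You also correctly isolate the one genuinely nontrivial point, namely that the minor of the $M$-transform restricted to the graphs with $cv=n+1$ on the degree-$d'$ side retains maximal rank, which Proposition \ref{prangi} by itself does not deliver. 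The paper closes precisely this gap by the method you sketch: it introduces the minors $E_{d-1,\le v}^{d,v}$ (degree-$(d-1)$ graphs with $\le v$ connected vertices evaluated in degree-$d$ graphs with exactly $v$ connected vertices), exhibits for the trivial group the block recursion
\begin{equation*}
E_{d-1,\leq v}^{d,v}\Bigl(\binom{v}{2}\Bigr)= \left[ \begin{array}{ll}
E_{d-1,\leq v}^{d,v}\Bigl(\binom{v}{2}-1\Bigr)  & 0 \\
E_{d-1,\leq v}^{d-1,v}\Bigl(\binom{v}{2}-1\Bigr) &
E_{d-2,\leq v}^{d-1,v}\Bigl(\binom{v}{2}-1\Bigr) \end{array} \right]
\end{equation*}
obtained by puncturing one variable at a time, and then applies the contraction argument of Proposition \ref{prangi} to pass to the general permutation group. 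So the only thing missing from your write-up is carrying out that final rank computation, and your proposed attack coincides with the paper's (which is itself rather terse at this point). Your reduction to $d\le\lfloor\binom{n+1}{2}/2\rfloor$ via complements is a sensible explicit addition that the paper leaves implicit in the range of its hypothesis.
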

\begin{proof}
The invariants of degree $1$ in $\E(n+1)$ are obviously generated by the invariants of
degree $1$ in $\E(n)$ when $n \geq 2$. If the invariants of degree $d-1$ in
$\E(n+1)$ are generated by the invariants of degree $d-1$ and less in $\E(n)$,
we have the above system of equations to solve the remaining
$h_{n+1}(d)-h_{n+1}(d-1)$ invariants of degree $d$ once we know the invariants
of degree $d$ in $\E(n)$. The only question is whether the invariants in
$\E(n+1) \setminus \E(n)$ of degree $d$ are linearly independent in the system
of equations.

In analogous fashion to Lemma \ref{prangi1} and Proposition \ref{prangi} we consider
the minors $E_{d-1,\leq v}^{d,v}$ of $M$-transforms, where the graphs of
degree $d-1$ with $\leq v$ connected vertices are evaluated in graphs of degree $d$ with $v$ connected vertices.

If $E_{d-1, \leq v}^{d,v}$ has full rank and the hypothesis 
\begin{equation}
h_{n+1}(d)-h_{n+1}(d-1) \leq h_n(d) \ \forall \ d \leq \lfloor {n+1 \choose
  2}/2
\end{equation}
holds, then the system (\ref{eq:lhd}) is fully/overdetermined for the graphs $g$ with
parameters $|g|=d$, $cv(g)=n+1$ in terms of graphs $h$ with parameters
$|h|=d-1$, $cv(h) \leq n$.

We start with the trivial group and obtain the following recursive structure after realizing that graphs $h$ with
parameters $|h|=d-1$, $cv(h) \leq n$ can be obtained simply by puncturing the
variables, one at a time, in graphs $g$ with $|g|=d$, $cv(g)=n+1$. It does not
really matter in which order the variables $a_{ij}$ are ordered.
\begin{equation}
E_{d-1,\leq v}^{d,v}({v \choose 2})= \left[ \begin{array}{ll} 
E_{d-1,\leq v}^{d,v}({v \choose 2}-1)  & 0 \\ 
E_{d-1,\leq v}^{d-1,v}({v \choose 2}-1) &
E_{d-2,\leq v}^{d-1,v}({n \choose 2}-1) \end{array} \right].
\end{equation}
This recursive structure together with the similar initial forms as in Lemma
\ref{prangi1} imply that the system has full rank. Then apply contractions
given in Lemma \ref{prangi} and conclude that $E_{d-1,\leq v}^{d,v}$
with the permutation group $G$ has also full rank.
\end{proof}

We list here some computational data on enumerators for the reader to see
how the condition of Proposition \ref{Ulam} holds on small graphs. The entries
in the table are $-h_{n}(d)+h_{n-1}(d)+h_{n}(d-1)$ which should be non-negative.
\begin{table}
\begin{center}
\begin{tabular}{r|rrrrrrrrr}
$d \setminus n$ &4&5&6&7&8&9&10&11&12\\
\hline
2&0&1&1&1&1&1&1&1&1\\
3&0&1&1&2&2&2&2&2&2\\
4&&0&2&4&4&5&5&5&5\\
5&&1&0&4&8&10&10&11&11\\
6&&&0&1&9&18&23&25&25\\
7&&&3&0&6&30&49&60&65\\
8&&&&-8&-9&24&82&133&157\\
9&&&&-13&-50&-24&96&265&385\\
10&&&&-2&-113&-203&-29&410&878\\
11&&&&&-169&-635&-738&173&1678\\
12&&&&&-201&-1431&-3018&-2237&1779\\
\hline
\end{tabular}
\caption{The difference $-h_{n}(d)+h_{n-1}(d)+h_{n}(d-1)$ for simple graphs.}\label{tauluv}
\end{center}
\end{table}

As we can see in Table \ref{tauluv}, the system is sufficient for graphs with
small number of edges. As expected, the system of equations (\ref{eq:lhd}),
where we multiply by only $I(a_{12})$ is insufficient for graphs with many edges.

\section{Open Problems}
The $M$-transform plays central role in the results of this paper. Just like
all graph invariants are linear combinations of the basic graph invariants,
all knot invariants are linear combinations of the Vassiliev's knot invariants.

\newtheorem{op1}{Problem}[section]
\begin{op1}
Can you apply the theory of $G$-posets Vassiliev's knot invariants and find
lower and upper bounds for knot invariants?
\end{op1}

\newtheorem{op2}[op1]{Problem}
\begin{op2}
Can you prove Ulam's reconstruction conjecture as in Proposition \ref{Ulam} by
using more invariants in the products?
\end{op2}

%%%%%%%%%%%%%%%%%%%%%%%%%%%%%%%%%%%%%%%%%%%%%%%%%%%%%%%%%%%%%%%%%%%%%%%%%%%%

\end{document}